\newcommand*\rel@kern[1]{\kern#1\dimexpr\macc@kerna}
\newcommand*\widebar[1]{%
  \begingroup
  \def\mathaccent##1##2{%
    \rel@kern{0.8}%
    \overline{\rel@kern{-0.8}\macc@nucleus\rel@kern{0.2}}%
    \rel@kern{-0.2}%
  }%
  \macc@depth\@ne
  \let\math@bgroup\@empty \let\math@egroup\macc@set@skewchar
  \mathsurround\z@ \frozen@everymath{\mathgroup\macc@group\relax}%
  \macc@set@skewchar\relax
  \let\mathaccentV\macc@nested@a
  \macc@nested@a\relax111{#1}%
  \endgroup
}
\def\blfootnote{\xdef\@thefnmark{}\@footnotetext}
\newtheorem{theorem}{Theorem}
\newtheorem{lemma}[theorem]{Lemma}
\newtheorem{proposition}[theorem]{Proposition}
\newtheorem{example}[theorem]{Example}
\newcommand{\trans}{\mathsf{T}} 
\newcommand{\Zk}{Z_k} 
\newcommand{\R}{{\mathbb R}}
\newcommand{\dd}[2]{\frac{\mathrm{d} #1}{\mathrm{d} #2}}
\newcommand{\DD}[2]{\frac{\partial #1}{\partial #2}}
\DeclareMathOperator{\diag}{diag}
\DeclareMathOperator{\im}{im}
\DeclareMathOperator{\sign}{sign}
\begin{document}

\title{Complex-balanced equilibria \\ of generalized mass-action systems: \\
Necessary conditions for linear stability}

\author{Bal\'azs Boros, Stefan M\"uller$^*$, and Georg Regensburger}
\blfootnote{
\scriptsize

\noindent
{\bf B.~Boros} (\href{mailto:balazs.boros@univie.ac.at}{balazs.boros@univie.ac.at}),
{\bf S.~M\"uller} (\href{mailto:st.mueller@univie.ac.at}{st.mueller@univie.ac.at}),
Faculty of Mathematics, University of Vienna, Oskar-Morgenstern-Platz 1, 1090 Wien, Austria

\smallskip
\noindent
{\bf G.~Regensburger} (\href{mailto:georg.regensburger@jku.at}{georg.regensburger@jku.at}),
Institute for Algebra, Johannes Kepler University Linz,
Altenbergerstra{\ss}e 69, 4040 Linz, Austria.

\smallskip
\noindent
$^*$ Corresponding author.
}

\maketitle

\begin{abstract}
It is well known that, for mass-action systems, complex-balanced equilibria are asymptotically stable.
For {\em generalized} mass-action systems, even if there exists a unique complex-balanced equilibrium (in every stoichiometric class and for all rate constants), it need not be stable.

We first discuss several notions of {\em matrix stability (on a linear subspace)} such as D-stability and diagonal stability,
and then we apply our abstract results to complex-balanced equilibria of generalized mass-action systems.
In particular, we show that linear stability (on the stoichiometric subspace and for all rate constants) {\em implies} uniqueness.
For cyclic networks,
we {\em characterize} linear stability 
(in terms of D-stability of the Jacobian matrix); 
and for weakly reversible networks,
we give {\em necessary} conditions for linear stability (in terms of D-semistability of the Jacobian matrices of all cycles in the network).
Moreover, we show that, for {\em classical} mass-action systems,
complex-balanced equilibria are not just asymptotically stable, but even diagonally stable (and hence linearly stable).

Finally, we recall and extend characterizations of D-stability and diagonal stability for matrices of dimension up to three,
and we illustrate our results by examples of irreversible cycles (of dimension up to three) and of reversible chains and S-systems (of arbitrary dimension).
\end{abstract}


\section{Introduction}

In their foundational paper from 1972,
Horn and Jackson considered chemical reaction networks (CRNs) with mass-action kinetics~\cite{HornJackson1972}.
In particular, they proved that complex-balanced equilibria are asymptotically stable
(for all rate constants),
by using an entropy-like Lyapunov function.
Moreover, they observed that every CRN with power-law kinetics can be written as another CRN with mass-action kinetics
(possibly with non-integer stoichiometric coefficients).
Typically, the resulting network
does not have desired properties such as weak reversibility and zero deficiency.
The more recent definition of CRNs with generalized mass-action kinetics
(involving both stoichiometric coefficients and kinetic orders)
allows to study power-law kinetics without having to rewrite the network~\cite{MuellerRegensburger2012,MuellerRegensburger2014}.

For the resulting {\em generalized mass-action systems}, 
existence and uniqueness of complex-balanced equilibria (in every stoichiometric class and for all rate constants) is well understood~\cite{MuellerRegensburger2012,MuellerHofbauerRegensburger2019}, 
but not much is known about their stability.
Even planar S-systems with a unique complex-balanced equilibrium
display rich dynamical behavior,
including super/sub-critical or degenerate Hopf bifurcations, centers, and up to three limit cycles;
see~\cite{BorosHofbauerMueller2017,BorosHofbauerMuellerRegensburger2018, BorosHofbauerMuellerRegensburger2019,BorosHofbauer2019}.

Since it is hard to find Lyapunov functions for generalized mass-action systems,
we approach the problem by linearization.
In other words, instead of asymptotic stability,
we investigate linear stability (on the stoichiometric subspace and for all rate constants).
To this end, we first discuss several notions of matrix stability (on a linear subspace)
such as D-stability and diagonal stability; 
see~\cite[Ch.~2]{HornJohnson1991},~\cite[Ch.~26]{Hogben2014}, and the references therein.
In particular, diagonal stability of the Jacobian matrix allows to construct Lyapunov functions with separated variables.
Our main results characterize linear stability of complex-balanced equilibria
(on the stoichiometric subspace and for all rate constants) for {\em cyclic} networks and give necessary conditions for {\em weakly reversible} networks.

In the setting of {\em classical} mass-action systems,
we prove that complex-balanced equilibria are not just asymptotically stable, but even diagonally stable (and hence linearly stable). For an alternative proof, see~\cite[Thm.~15.2.2.]{Feinberg2019}.
As opposed to asymptotic stability, linear stability is robust with respect to small perturbations of the system.
In particular, this allows to show the robustness of the classical deficiency zero theorem
with respect to small perturbations of the kinetic orders from the stoichiometric coeffcients;
see~\cite[Cor.~47]{MuellerHofbauerRegensburger2019}.


{\bf Organization of the work and main results.}
In Section~\ref{sec:gmas},
we introduce generalized mass-action systems,
and in Section~\ref{sec:stability_notions},
we discuss several notions of matrix stability (on a linear subspace).
In Section~\ref{sec:main}, we present our main results:
\begin{itemize}
\item[4.1] For classical mass-action systems, 
complex-balanced equilibria are diagonally stable (and hence linearly stable); see Theorem~\ref{prop:YequalsYtilde}.
\end{itemize}
\begin{itemize}
\item[4.2] For generalized mass-action systems, linear stability of complex balanced equilibria
implies uniqueness; see Theorem~\ref{thm:main_injectivity}.
\item[4.3] For cyclic networks,
linear stability is equivalent to D-stability of the Jacobian matrix; see Theorem~\ref{thm:main_cycle}.
\item[4.4] For weakly reversible networks,
linear stability implies D-semistability of the Jacobian matrices of all cycles in the network; see Theorem~\ref{thm:main_wr}.
\end{itemize}
In Section~\ref{sec:characterize_restricted_stabilities}, 
we recall and extend characterizations of notions of matrix stability,
and finally, in Section~\ref{sec:examples},
we illustrate our results by a series of examples.




\section{Generalized mass-action systems} \label{sec:gmas}

A {\em generalized chemical reaction network} $(G,y,\tilde y)$ is based on a directed graph $G=(V,E)$ without self-loops;
every vertex $i \in V=\{1,\ldots,m\}$ is labeled with a {\em (stoichiometric) complex} $y(i) \in \R^n_{\ge0}$, 
and every source vertex $i \in V_s \subseteq V$ is labeled with a {\em kinetic-order complex} $\tilde y(i) \in \R^n$.
If every component of $G$ is strongly connected, we call $G$ and $(G,y,\tilde y)$ {\em weakly reversible}.

A {\em generalized mass-action system} $(G_k,y,\tilde y)$ is a generalized chemical reaction network $(G,y,\tilde y)$
together with edge labels $k \in \R^E_+$, resulting in the labeled digraph $G_k$.
Every edge $(i \to i') \in E$, representing the {\em chemical reaction} $y(i) \to y(i')$, is labeled with a {\em rate constant} $k_{i \to i'} > 0$. 

The ODE system for the {\em species concentrations} $x \in \R^n_+$,
associated with the generalized mass-action system $(G_k,y,\tilde y)$, is given by
\begin{equation} \label{ode1}
\dd{x}{t} = \sum_{(i \to i') \in E} k_{i \to i'} \, x^{\tilde y(i)} \big(y(i')-y(i)\big) .
\end{equation}
The sum ranges over all reactions, 
and every summand is a product of the reaction rate and the difference of product and educt complexes.
Thereby, for $x \in \R^n_+$ and $y \in \R^n$, we define $x^y = x_1^{y_1} \cdots x_n^{y_n} \in \R_+$.
Moreover, for $Y = [y^1, \ldots, y^m] \in \R^{n \times m}$, we define $x^Y \in \R^m_+$ via $(x^Y)_j = x^{y^j}$ for $j = 1, \ldots, m$.

The right-hand-side of the ODE system~\eqref{ode1} can be decomposed into stoichiometric, graphical, and kinetic-order contributions,
\begin{equation} \label{ode2}
\dd{x}{t} = Y I_E \diag(k) (I_E^s)^\trans \, x^{\widetilde Y} = Y A_k \, x^{\widetilde Y} ,
\end{equation}
where $Y,\, \widetilde Y \in \R^{n \times V}$ are the matrices of stoichiometric and kinetic-order complexes,
$I_E, I_E^s \in \R^{V \times E}$ are the incidence and source matrices%
\footnote{%
Explicitly, $(I_E)_{i,j \to j'} = -1$ if $i=j$, $(I_E)_{i,j \to j'} = 1$ if $i=j'$, and $(I_E)_{i,j \to j'} = 0$ otherwise.
Further, $(I_E^s)_{i,j \to j'} = 1$ if $i=j$ and $(I_E^s)_{i,j \to j'} = 0$ otherwise.
Finally, $(A_k)_{i',i} = k_{i \to i'}$ if $(i \to i') \in E$, $(A_k)_{i,i} = - \sum_{i \to i'} k_{i \to i'}$, and $(A_k)_{i',i} = 0$ otherwise.
}
of the digraph $G$, 
and $A_k = I_E \diag(k) (I_E^s)^\trans \in \R^{V \times V}$ is the Laplacian matrix of the labeled digraph $G_k$.
(We note that columns of $\widetilde Y$ corresponding to non-source vertices can be chosen arbitrarily.)

Clearly, the change over time lies in the {\em stoichiometric subspace}
\[
S = \im (Y I_E) ,
\]
that is, $\dd{x}{t} \in S$.
Equivalently, trajectories are confined to cosets of $S$,
that is, $x(t) \in x(0)+S$.
For $x' \in \R^n_+$, the set $(x'+S) \cap \R^n_+$ is called a {\em stoichiometric class}.

Analogously, we introduce the {\em kinetic-order subspace}
\[
\widetilde S = \im (\widetilde Y I_E) .
\]


A positive equilibrium $x \in \R^n_+$ of the ODE system~\eqref{ode2} that fulfills
$
A_k \, x^{\widetilde Y} = 0
$
is called a {\em complex-balanced equilibrium}.
The set of all complex-balanced equilibria is given by
\[
Z_k = \{ x \in \R^n_+ \mid A_k \, x^{\widetilde Y} = 0 \} .
\] 

The Jacobian matrix of the right-hand-side of~\eqref{ode2} is given by
\begin{equation} \label{jac}
J(x) = Y A_k \diag(x^{\widetilde Y}) \, \widetilde Y^\trans \diag(x^{-1}) .
\end{equation}

For given generalized chemical reaction network $(G,y,\tilde y)$,
we study whether, for all rate constants $k\in\R^E_+$ (such that $\Zk\neq\emptyset$),
all complex-balanced equilibria $x^* \in \Zk$ are linearly stable on $S$,
that is, the corresponding Jacobian matrices $J(x^*)$ are stable on $S$.


Before we turn to stability,
we recall the well-known fact that every positive vector is a complex-balanced equilibrium for some rate constant
(see e.g.\ the proof of Lemma~1 in~\cite{MuellerRegensburger2014}).
Further, we state a characterization of the uniqueness of complex-balanced equilibria
in terms of sign vectors of the stoichiometric and kinetic-order subspaces
(see Proposition~3.1 in~\cite{MuellerRegensburger2012}).

\begin{proposition} \label{pro:k}
Consider a weakly reversible generalized chemical reaction network, and let $x^* \in \R_+^n$. 
Then, there exist rate constants $k \in \R^E_+$ such that $x^* \in Z_k$.
\end{proposition}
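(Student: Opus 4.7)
The plan is to reduce the claim to the existence of a positive circulation on the digraph $G$, and then to produce such a circulation using strong connectedness of each component of a weakly reversible network.

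First I would fix $v = (x^*)^{\widetilde Y} \in \R^V_+$, which is a strictly positive vector depending only on $x^*$ and $\widetilde Y$. Unpacking $A_k = I_E \diag(k) (I_E^s)^\trans$, the defining condition $A_k v = 0$ for $x^* \in \Zk$ amounts to
\[
\sum_{(j \to i) \in E} k_{j \to i} \, v_j \;=\; \sum_{(i \to j') \in E} k_{i \to j'} \, v_i
\]
at every vertex $i \in V$. Introducing $w \in \R^E$ via $w_{j \to j'} = k_{j \to j'} v_j$, this is precisely the condition $I_E w = 0$. Since $v$ is strictly positive, producing rate constants $k \in \R^E_+$ with the desired property is equivalent to producing a strictly positive circulation $w \in \R^E_+$ with $I_E w = 0$, after which one sets $k_{j \to j'} = w_{j \to j'}/v_j > 0$.

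Next I would construct such a $w$ using weak reversibility. Since the problem is local to each connected component of $G$, I may assume $G$ is strongly connected. Then every edge $e \in E$ lies on at least one directed cycle $C_e$ in $G$, and for any directed cycle $C$ the indicator vector $\chi_C \in \R^E_{\ge 0}$ satisfies $I_E \chi_C = 0$. Taking $w = \sum_{e \in E} \chi_{C_e}$ yields a vector in $\R^E_+$ annihilated by $I_E$, as required. (An equivalent route is the Matrix--Tree theorem, which identifies the kernel of the Laplacian of a strongly connected digraph as a one-dimensional space spanned by a strictly positive vector of weighted in-trees; this is standard but heavier machinery than needed.)

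No step is really an obstacle: the proposition is essentially a reformulation combined with the standard fact that strongly connected digraphs admit positive circulations. The only point deserving care is that the cycles $C_e$ must lie entirely within $G$ rather than within some enlargement, which is exactly what strong connectedness guarantees.
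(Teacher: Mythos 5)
Your proof is correct and complete. Note that the paper itself offers no proof of this proposition; it only cites the proof of Lemma~1 in the reference \cite{MuellerRegensburger2014}, where the standard argument runs through the kernel of the Laplacian: for a weakly reversible labeled digraph, $\ker A_{k'}$ contains a strictly positive vector $\chi$ (by the Matrix--Tree theorem or Perron--Frobenius), and one then rescales the rate constants, $k_{i \to i'} = k'_{i \to i'}\,\chi_i/(x^*)^{\tilde y(i)}$, so that $A_k \diag((x^*)^{\widetilde Y}) = A_{k'}\diag(\chi)$ annihilates the all-ones vector. Your route is genuinely more elementary: you reduce the balance condition $A_k (x^*)^{\widetilde Y}=0$ directly to the existence of a strictly positive circulation $w$ with $I_E w = 0$ (the substitution $w_{j\to j'} = k_{j\to j'} (x^*)^{\tilde y(j)}$ is a bijection on positive vectors because $(x^*)^{\widetilde Y}>0$), and then you build such a circulation by summing indicator vectors of directed cycles covering every edge, which exists precisely because each component is strongly connected. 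This avoids the Matrix--Tree machinery entirely and makes transparent that weak reversibility is exactly what is needed; the only point worth spelling out is that for an edge $j\to j'$ a simple directed path from $j'$ back to $j$ closes up with that edge to a genuine directed cycle, which you correctly attribute to strong connectedness.
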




\begin{proposition}\label{pro:injectivity}
Consider a weakly reversible generalized chemical reaction network.
There exists at most one complex-balanced equilibrium in every stoichiometric class 
and for all rate constants if and only if $\sign(S) \cap \sign(\widetilde S^\perp) \neq \{0\}$.
\end{proposition}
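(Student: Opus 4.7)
\emph{Proof Proposal.}

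The plan is to reduce the uniqueness question to a condition on sign vectors by parametrizing the set of complex-balanced equilibria that share given rate constants. By Proposition~\ref{pro:k}, every positive vector $x^* \in \R^n_+$ is a complex-balanced equilibrium for suitable $k$, so it suffices to analyze, for each fixed $x^*$, whether a distinct complex-balanced equilibrium can share the rate constants of $x^*$ while lying in the same stoichiometric class.

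First, for a weakly reversible network and any $k$ with $x^* \in Z_k$, I would establish the parametrization
\[
Z_k = \{x^* \cdot e^u \mid u \in \widetilde S^\perp\},
\]
with componentwise multiplication and exponentiation. The argument relies on the fact that, for a weakly reversible digraph, $\ker A_k$ is spanned by vectors that are positive on each strongly connected component (and zero elsewhere), together with the identity $(x^* \cdot e^u)^{\widetilde Y} = (x^*)^{\widetilde Y} \cdot e^{\widetilde Y^\trans u}$. The equilibrium condition $A_k (x^* \cdot e^u)^{\widetilde Y} = 0$ then translates to $\widetilde Y^\trans u$ being constant on each strongly connected component, which is precisely $u \in \widetilde S^\perp = \ker(I_E^\trans \widetilde Y^\trans)$.

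With the parametrization in hand, uniqueness in the stoichiometric class of $x^*$ becomes the following statement: for every $u \in \widetilde S^\perp$, the condition $x^* \cdot (e^u - 1) \in S$ forces $u = 0$. The link to sign vectors uses $\sign(e^u - 1) = \sign(u)$ and the freedom to choose $x^* > 0$ arbitrarily. Given a nonzero $\sigma \in \sign(S) \cap \sign(\widetilde S^\perp)$, pick $u \in \widetilde S^\perp$ and $v \in S$ with $\sign(u) = \sign(v) = \sigma$, then solve $x^*_i (e^{u_i} - 1) = v_i$ componentwise for $x^*_i > 0$, which is feasible exactly because the signs agree (and $u_i = 0 \Leftrightarrow v_i = 0$); this yields two distinct equilibria $x^*$ and $x^* \cdot e^u$ in the same stoichiometric class. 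Conversely, two distinct equilibria in the same class deliver a nonzero $u \in \widetilde S^\perp$ and $v = x^* \cdot (e^u - 1) \in S$ with $\sign(u) = \sign(v)$.

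The main obstacle is establishing the parametrization $Z_k = x^* \cdot e^{\widetilde S^\perp}$ in the weakly reversible generalized setting, where one must carefully use the component-wise structure of $\ker A_k$; once this is in place, the rest is a bookkeeping exercise about positive solvability of componentwise exponential equations. A minor but important point during the write-up is to compare the direction of the derived sign-vector equivalence against the direction stated in the proposition, since the chain above links non-uniqueness with the existence of a nonzero element of $\sign(S) \cap \sign(\widetilde S^\perp)$.
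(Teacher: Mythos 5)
Your argument is correct in substance, but note that the paper itself does not prove Proposition~\ref{pro:injectivity}; it imports it from Proposition~3.1 of \cite{MuellerRegensburger2012}, and your proposal is essentially the standard proof from that reference: parametrize $Z_k$ as $x^*\cdot e^{\widetilde S^\perp}$, translate membership in the stoichiometric class into $x^*\cdot(e^u-1)\in S$, and use $\sign(e^u-1)=\sign(u)$ together with the free choice of $x^*>0$ (and Proposition~\ref{pro:k}) to pass back and forth between non-uniqueness and a nonzero common sign vector. Two remarks. First, the one step you should tighten is the claim that $e^{\widetilde Y^\trans u}\in\ker A_{k^*}$ forces $\widetilde Y^\trans u$ to be \emph{constant} on each component: the kernel of a weakly reversible Laplacian is spanned by vectors that are positive on their component, but they are the \emph{characteristic} vectors of the components only because $x^*\in Z_k$ gives $A_{k^*}\mathbf{1}=0$; this is exactly the content of Lemma~\ref{lem:kerLaplacian}, which you should invoke (or reprove) at that point. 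Second, your closing caveat about the direction of the equivalence is well taken and is not merely a bookkeeping issue: your (correct) chain shows that \emph{non}-uniqueness is equivalent to $\sign(S)\cap\sign(\widetilde S^\perp)\neq\{0\}$, so uniqueness corresponds to $\sign(S)\cap\sign(\widetilde S^\perp)=\{0\}$. The statement as printed in this source therefore has the condition negated (a typo), which is confirmed by how the proposition is used in the proof of Theorem~\ref{thm:main_injectivity}, where non-uniqueness is taken to produce nonzero $u\in\widetilde S^\perp$ and $v\in S$ with $\sign(u)=\sign(v)$.
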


For surveys on the uniqueness of equilibria and related injectivity results,
see~\cite{MuellerFeliuRegensburgerEtAl2016, BanajiPantea2016, FeliuMuellerRegensburger2019}.

%
%
%


\section{Notions of matrix stability} \label{sec:stability_notions}

Let $S$ be a linear subspace of $\R^n$.
For an ODE system $\dot x = f(x)$ with $f \colon \R^n \to \R^n$ and $\im f \subseteq S$, 
cosets of $S$ are forward invariant.
Hence, given an equilibrium $x \in \R^n$,
one is interested in its asymptotic stability on the coset $x+S$.
To approach the problem via linearization, one considers the Jacobian matrix $J(x) = \DD{f}{x} \in \R^{n \times n}$,
more precisely, the linear map $J(x)|_S  \colon  S \to S$.
By the Hartman-Grobman Theorem, 
if $x$ is hyperbolic (that is, if all eigenvalues  of $J(x)|_S$ have non-zero real part),
then the original ODE system is dynamically equivalent (technically: topologically conjugate) 
to the linear ODE system $\dot y = J(x) y$ on $S$; see e.g.~\cite[Thm.~9.9]{Teschl2012}.
In the following,
we recall notions of stability of a square matrix and extend them to stability on a linear subspace.

A square matrix is \emph{stable} (respectively, \emph{semistable}) if all its eigenvalues have negative (respectively, non-positive) real part. We start with a matrix formulation of Lyapunov's Theorem; see~\cite[Ch.~XV, Thm.~3']{Gantmacher1959} or~\cite[Thm.~2.2.1]{HornJohnson1991}. 

\begin{proposition} \label{pro:lyapunov_classical}
Let $A\in\R^{n \times n}$. 
The following implications hold:
\begin{center}
\begin{tabular}{ccc}
\begin{tabular}{c}
There exists $P=P^\trans>0$ \\
s.t.\ $PA+A^\trans P<0$.
\end{tabular} & $\Rightarrow$ & 
\begin{tabular}{c}
There exists $P=P^\trans>0$ \\
s.t.\ $PA+A^\trans P\leq0$.
\end{tabular} \\
\\[-1ex]
$\Updownarrow$ & & $\Downarrow$ \\
\\[-1ex]
\begin{tabular}{c}
$A$ is stable.
\end{tabular}  & $\Rightarrow$ & 
\begin{tabular}{c}
$A$ is semistable.
\end{tabular}
\end{tabular}
\end{center}
\end{proposition}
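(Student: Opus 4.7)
The plan is to verify the four claimed implications; two of them are immediate and two require a short argument. The implications (A) $\Rightarrow$ (B) and (C) $\Rightarrow$ (D) follow at once by weakening strict inequalities to non-strict ones (in the matrix ordering and on the real parts of eigenvalues, respectively), so no work is needed there.

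For the implications ``(A) $\Rightarrow$ (C)'' and ``(B) $\Rightarrow$ (D)'', I would use the same eigenvalue computation. Suppose $P=P^\trans>0$ and $PA+A^\trans P \le 0$ (respectively $<0$). Let $\lambda\in\C$ be any eigenvalue of $A$ with eigenvector $v\in\C^n\setminus\{0\}$, so that $Av=\lambda v$. Then
\[
\bar v^\trans (PA + A^\trans P) v \;=\; \lambda\,\bar v^\trans P v + \bar\lambda\, \bar v^\trans P v \;=\; 2\,\Re(\lambda)\,\bar v^\trans P v.
\]
Since $P>0$ and $v\neq 0$, the scalar $\bar v^\trans P v$ is real and strictly positive. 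Hence the sign of $\Re(\lambda)$ coincides with the sign of the left-hand side: $PA+A^\trans P<0$ forces $\Re(\lambda)<0$ (stability), and $PA+A^\trans P\le 0$ forces $\Re(\lambda)\le 0$ (semistability).

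The only non-trivial direction is ``(C) $\Rightarrow$ (A)'', that is, constructing a Lyapunov certificate from stability. Here I would use the standard integral construction: fix any $Q=Q^\trans>0$ (e.g.\ $Q=I$) and set
\[
P \;=\; \int_0^\infty \e^{A^\trans t}\, Q\, \e^{A t}\, \mathrm{d} t.
\]
Since $A$ is stable, the matrix exponential $\e^{At}$ decays exponentially, so the integral converges absolutely. Clearly $P=P^\trans$, and $P>0$ because for any $v\neq 0$, the integrand $v^\trans \e^{A^\trans t} Q \e^{At} v = (\e^{At}v)^\trans Q (\e^{At}v)$ is continuous, non-negative, and strictly positive at $t=0$. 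Finally, differentiating $\e^{A^\trans t} Q \e^{At}$ with respect to $t$ gives $\e^{A^\trans t}(A^\trans Q + QA)\e^{At}$, so the fundamental theorem of calculus yields
\[
A^\trans P + P A \;=\; \int_0^\infty \frac{\mathrm{d}}{\mathrm{d} t}\bigl(\e^{A^\trans t} Q \e^{A t}\bigr)\,\mathrm{d} t \;=\; \bigl[\e^{A^\trans t} Q \e^{A t}\bigr]_0^\infty \;=\; -Q \;<\; 0,
\]
where the boundary term at infinity vanishes by stability. The main technical point, and the only step requiring more than a line, is justifying convergence of the integral and the vanishing of the boundary term; both follow from the exponential decay estimate $\|\e^{At}\|\le C\e^{-\alpha t}$ valid for some $C,\alpha>0$ whenever $A$ is stable, which is itself a consequence of Jordan form.
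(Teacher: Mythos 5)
Your proof is correct, and it is genuinely more self-contained than the paper's. The paper disposes of the left-hand equivalence by simply citing Lyapunov's Theorem (with references to Gantmacher and Horn--Johnson) and proves only the remaining implication, from the weak inequality to semistability, by a dynamical argument: $x \mapsto x^\trans P x$ is a Lyapunov function for $\dot x = Ax$, so the origin is Lyapunov stable and $A$ cannot have an eigenvalue with positive real part. You instead prove everything from scratch: the identity $\bar v^\trans(PA+A^\trans P)v = 2\Re(\lambda)\,\bar v^\trans P v$ for an eigenpair $(\lambda,v)$ yields both downward implications purely algebraically, and the integral $P=\int_0^\infty e^{A^\trans t} Q\, e^{At}\,\mathrm{d}t$ supplies the converse direction that the paper outsources to the literature. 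Both routes are sound; yours buys a reference-free proof at the cost of reproducing standard material, while the paper buys brevity. Two micro-points you may wish to make explicit: the step $\bar v^\trans A^\trans P v=\bar\lambda\,\bar v^\trans P v$ uses that $A$ is real (so $A\bar v=\bar\lambda\bar v$), and the inequality $\bar v^\trans M v\le 0$ for a real symmetric $M\le 0$ and complex $v$ follows by splitting $v$ into real and imaginary parts; both are routine.
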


\newpage

\begin{proof}
Obviously, the implications from left to right hold.

The equivalence on the left is Lyapunov's Theorem.

Finally, if there exists $P=P^\trans>0$ with $PA+A^\trans P\leq 0$, 
then the origin is Lyapunov stable for the linear ODE $\dot x = Ax$ (by the Lyapunov function $x \mapsto x^\trans P x$), 
and thus $A$ cannot have an eigenvalue with positive real part.
\end{proof}

Let $S$ be a linear subspace.
We say that a square matrix $A$ with $\im A \subseteq S$ is \emph{stable on $S$} (respectively, \emph{semistable on $S$}) if all eigenvalues of the linear map $A|_S\colon S\to S$ have negative (respectively, non-positive) real part. We extend Lyapunov's Theorem to stability on a linear subspace.

\begin{proposition}\label{pro:lyap_on_S}
Let $A \in \R^{n \times n}$ and $S \subseteq \R^n$ be a linear subspace with $\im A \subseteq S$. 
The following implications hold:
\begin{center}
\begin{tabular}{ccc}
\begin{tabular}{c}
There exists $P=P^\trans>0$ on $S$ \\ 
s.t.\ $PA+A^\trans P<0$ on $S$.
\end{tabular} & $\Rightarrow$ & 
\begin{tabular}{c}
There exists $P=P^\trans>0$ on $S$ \\
s.t.\ $PA+A^\trans P\leq0$ on $S$.
\end{tabular} \\
\\[-1ex]
$\Updownarrow$ & & $\Downarrow$ \\
\\[-1ex]
\begin{tabular}{c} $A$ is stable on $S$.\end{tabular} & $\Rightarrow$ & 
\begin{tabular}{c} $A$ is semistable on $S$.\end{tabular}
\end{tabular}
\end{center}
\end{proposition}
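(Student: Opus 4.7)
The plan is to reduce Proposition~\ref{pro:lyap_on_S} to the classical Lyapunov theorem (Proposition~\ref{pro:lyapunov_classical}) by working in coordinates adapted to $S$. The two horizontal implications are immediate from the definitions, and the downward implication on the right mirrors the proof of Proposition~\ref{pro:lyapunov_classical}: since $\im A\subseteq S$, the flow of $\dot x = Ax$ preserves $S$, so the restriction of $V(x)=x^\trans P x$ to $S$ is a Lyapunov function for the linear flow $\dot y = A|_S\, y$ on $S$, which rules out eigenvalues of $A|_S$ with positive real part.

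The core of the proof is the equivalence on the left. I would first fix a matrix $B\in\R^{n\times r}$ (with $r=\dim S$) whose columns form a basis of $S$. Since $\im A\subseteq S$, there is a unique $A_S\in\R^{r\times r}$ with $AB=BA_S$; this is the matrix of $A|_S$ in the chosen basis, and by definition $A$ is stable on $S$ iff $A_S$ is stable. For any symmetric $P\in\R^{n\times n}$, set $P_S := B^\trans P B$. A direct computation using $AB=BA_S$ gives
\[
x^\trans P x = y^\trans P_S y, \qquad x^\trans(PA+A^\trans P)x = y^\trans(P_S A_S+A_S^\trans P_S)y
\]
for every $x=By\in S$. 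Hence the strict conditions in the top-left box translate verbatim into the classical strict Lyapunov inequalities for $A_S$ and $P_S$ on $\R^r$.

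With this dictionary in place, the equivalence is just two applications of Proposition~\ref{pro:lyapunov_classical}. If $P$ exists as in the top-left box, then $P_S$ witnesses stability of $A_S$, hence of $A$ on $S$. Conversely, if $A$ is stable on $S$, choose a classical Lyapunov witness $Q=Q^\trans>0$ for $A_S$ and lift it by setting $P := B(B^\trans B)^{-1} Q (B^\trans B)^{-1} B^\trans$; then $B^\trans P B = Q$, so the identities above yield $P>0$ on $S$ and $PA+A^\trans P<0$ on $S$.

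The only real obstacle is bookkeeping: making sure that the two different uses of the phrase ``on $S$'' --- positive/negative definiteness of a symmetric matrix as a quadratic form restricted to $S$, and stability of $A$ as a linear endomorphism $A|_S\colon S\to S$ --- are reflected by the \emph{same} change of basis. Once both are expressed in the $r$-dimensional coordinates provided by $B$, the problem collapses to the classical one.
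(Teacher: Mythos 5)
Your proof is correct and follows essentially the same route as the paper's: both reduce the statement to Proposition~\ref{pro:lyapunov_classical} by choosing a basis $B$ of $S$, representing $A|_S$ by an $r\times r$ matrix, transporting $P$ to $B^\trans P B$, and lifting the classical Lyapunov witness back to $\R^{n\times n}$ via $B$. The paper merely takes $B$ orthonormal (so the reduced matrix is $B^\trans A B$ and the lift is $P=BQB^\trans$) and handles the right-hand downward implication by the same reduction rather than a direct Lyapunov-function argument; these are cosmetic differences.
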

\begin{proof}
Obviously, the implications from left to right hold.

To prove the equivalence on the left, 
let $s = \dim S$ and $S = \im B$ with $B \in \R^{n\times s}$ and $B^\trans B = I \in \R^{s \times s}$. 
Then, for every $x\in S$, there exists a unique $y\in\R^s$ such that $x = By$. 
Further, $y=B^\trans x$ and, using $\dot x = Ax$ (on $S$), we have $\dot y = B^\trans A B y$ (on $\R^s$). 
Thus, $A$ is stable on $S$ if and only if $B^\trans A B \in \R^{s \times s}$ is stable on $\R^s$. 
By Theorem~\ref{pro:lyapunov_classical}, the latter is equivalent to the existence of $Q \in \R^{s\times s}$ with $Q=Q^\trans>0$ such that $y^\trans(Q B^\trans A B + B^\trans A^\trans BQ) y<0$ for all $0\neq y \in \R^s$. 
This is further equivalent to the existence of $P = B Q B^\trans \in \R^{n \times n}$ with $P = P^\trans > 0$ on $S$
such that $x^\trans (PA+A^\trans P) x<0$ for all $0\neq x \in S$.

Finally, 
let $P=P^\trans>0$ on $S$ and $PA+A^\trans P\leq0$ on $S$. 
With $B$ as above, let $Q = B^\trans P B \in \R^{s \times s}$
and note that $A = B B^\trans A$ (because $x = B B^\trans x$ for all $x \in S$).
Then $Q=Q^\trans>0$ and $QB^\trans A B + B^\trans A^\trans B Q \le 0$.
By Proposition~\ref{pro:lyapunov_classical}, $B^\trans A B$ is semistable, and, as in the previous paragraph, 
the latter is equivalent to $A$ being semistable on $S$.
\end{proof}

We recall more notions of stability of square matrices. 
For convenience, let $\mathcal{D}_+ \subseteq \R^{n \times n}$ denote the set of diagonal matrices with positive diagonal. 
A matrix $A \in \R^{n\times n}$ is \emph{diagonally stable} (respectively, \emph{diagonally semistable}) if there exists $P \in \mathcal{D}_+$ such that $PA+A^\trans P<0$ (respectively, $PA+A^\trans P\leq0$). 
We note that diagonal stability is also known as Lyapunov diagonal stability or Volterra-Lyapunov stability.
A matrix $A \in \R^{n\times n}$ is \emph{D-stable} (respectively, \emph{D-semistable}) if $AD$ is stable (respectively, semistable) for all $D \in \mathcal{D}_+$.
The following result summarizes the relations between these notions.

\newpage

\begin{proposition} \label{pro:stab_implications_classical}
Let $A\in\R^{n \times n}$. 
The following implications hold:
\begin{center}
\begin{tabular}{ccc}
A is diagonally stable.
& $\Rightarrow$ &
$A$ is diagonally semistable. \\
\\[-1ex]
$\Downarrow$ &  & $\Downarrow$ \\
\\[-1ex]
A is D-stable.
& $\Rightarrow$ &
$A$ is D-semistable. \\
\\[-1ex]
$\Downarrow$ &  & $\Downarrow$ \\
\\[-1ex]
$A$ is stable. & $\Rightarrow$ &
$A$ is semistable.
\end{tabular}
\end{center}
%
\end{proposition}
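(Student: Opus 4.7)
The plan is to handle the six implications in the diagram separately, dealing first with the trivial ones and then with the two non-trivial vertical implications on the top row. The three horizontal implications are immediate: in each case a strict negative-definiteness condition implies the non-strict one, so diagonally stable yields diagonally semistable, and ``all eigenvalues have negative real part'' trivially yields ``all eigenvalues have non-positive real part'' (both for a single matrix and for the whole family $\{AD : D \in \mathcal{D}_+\}$). The two lower vertical implications, D-(semi)stable $\Rightarrow$ (semi)stable, follow by specializing $D = I \in \mathcal{D}_+$.

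The substantive content is the vertical implications on the top, namely that diagonal (semi)stability entails D-(semi)stability. The main observation I would use is the similarity $D(AD)D^{-1} = DA$, so that $AD$ and $DA$ have the same spectrum, and it suffices to verify (semi)stability of $DA$. Given a witness $P \in \mathcal{D}_+$ for diagonal (semi)stability of $A$, I set $Q = D^{-1}P$, which again lies in $\mathcal{D}_+$ since $P$ and $D^{-1}$ are diagonal with positive entries. Using that diagonal matrices commute, one computes
\[
 Q(DA) + (DA)^\trans Q \;=\; D^{-1}P D A + A^\trans D D^{-1} P \;=\; PA + A^\trans P ,
\]
and the right-hand side is negative definite (resp.\ negative semidefinite) by assumption.

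At this point Proposition~\ref{pro:lyapunov_classical} applies directly: in the negative-definite case, the equivalence in Proposition~\ref{pro:lyapunov_classical} yields stability of $DA$, and in the negative-semidefinite case, the one-way implication yields semistability of $DA$. Transferring back along the similarity $D(AD)D^{-1}=DA$ proves that $AD$ is stable (resp.\ semistable). Since $D \in \mathcal{D}_+$ was arbitrary, $A$ is D-stable (resp.\ D-semistable), closing both top vertical arrows.

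The only subtlety worth flagging is the need to keep $Q$ diagonal (and positive) so that the substitution $Q = D^{-1}P$ produces a genuine diagonal Lyapunov inequality; this is precisely what makes the identity $Q(DA) + (DA)^\trans Q = PA + A^\trans P$ collapse so cleanly, and it uses nothing beyond commutativity of diagonal matrices. No new machinery is needed, and I do not anticipate a genuine obstacle: the whole proposition is essentially bookkeeping around Proposition~\ref{pro:lyapunov_classical} together with this one similarity/commutativity trick.
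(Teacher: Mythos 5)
Your proof is correct and follows essentially the same route as the paper: both reduce the two nontrivial vertical implications to Proposition~\ref{pro:lyapunov_classical} by producing a diagonal positive definite Lyapunov witness for the scaled matrix, with the other implications dismissed as trivial (including $D=I$ for the bottom row). The paper is marginally more direct, taking $DP$ as the witness for $AD$ itself and using the congruence $(DP)(AD)+(AD)^\trans(DP)=D(PA+A^\trans P)D<0$ (respectively $\le 0$), which avoids your detour through the similarity $D(AD)D^{-1}=DA$; both versions are equally valid.
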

\begin{proof}
Obviously, the implications from left to right hold.

To prove the implications from the first to the second row,
let $P \in \mathcal{D}_+$ be such that $PA+A^\trans P<0$ (respectively, $PA+A^\trans P\leq0$).
Then, for any $D \in \mathcal{D}_+$, we have $(DP)(AD)+(AD)^\trans(DP)= D(PA+A^\trans P)D < 0$ (respectively, $\le 0$), and by Proposition~\ref{pro:lyapunov_classical}, $AD$ is stable (respectively, semistable).

Finally, the implications from the second row to the third row are trivial.
(If $AD$ is (semi)stable for any $D\in\mathcal{D}_+$, then this holds for $D=I$.)
\end{proof}


Let $S$ be a linear subspace.
We say that $A \in \R^{n \times n}$ with $\im A \subseteq S$ is \emph{diagonally stable on $S$} (respectively, \emph{diagonally semistable on $S$}) 
if there exists $P \in \mathcal{D}_+$ such that $PA+A^\trans P<0$ on $S$ (respectively, $PA+A^\trans P\le0$ on $S$).
We say that $A \in \R^{n \times n}$ with $\im A \subseteq S$ is \emph{D-stable on $S$} (respectively, \emph{D-semistable on $S$}) 
if $AD$ is stable on $S$ (respectively, semistable on $S$) for all $D \in \mathcal{D}_+$.
Finally, we introduce an even stronger notion.
We say that $A \in \R^{n \times n}$ with $\im A \subseteq S$ is \emph{diagonally D-stable on $S$} (respectively, \emph{diagonally D-semistable on $S$})
if, for all $D \in \mathcal{D}_+$, there exists $P \in \mathcal{D}_+$ such that $PAD+DA^\trans P<0$ on $S$ (respectively, $PAD+DA^\trans P\le0$ on $S$).
We note that diagonal D-stability on $S$ trivially implies diagonal stability on $S$,
and the two notions agree for $S=\R^n$, see also~\cite[p.~257]{Cross1978}.
Moreover, we have the following relations.

\begin{proposition} \label{pro:stab_implications_on_S}
Let $A \in \R^{n \times n}$ and $S \subseteq \R^n$ be a linear subspace with $\im A \subseteq S$. 
The following implications hold:
\begin{center}
\begin{tabular}{ccc}
A is diagonally D-stable on $S$.
& $\Rightarrow$ &
$A$ is diagonally D-semistable on $S$. \\
\\[-1ex]
$\Downarrow$ &  & $\Downarrow$ \\
\\[-1ex]
A is D-stable on $S$.
& $\Rightarrow$ &
$A$ is D-semistable on $S$. \\
\\[-1ex]
$\Downarrow$ &  & $\Downarrow$ \\
\\[-1ex]
$A$ is stable on $S$. 
& $\Rightarrow$ &
$A$ is semistable on $S$.
\end{tabular}
\end{center}
%
\end{proposition}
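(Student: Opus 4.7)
The plan is to mirror the proof of Proposition~\ref{pro:stab_implications_classical}, using the subspace version of Lyapunov's theorem (Proposition~\ref{pro:lyap_on_S}) in place of its classical counterpart. The two horizontal implications (``stable $\Rightarrow$ semistable'' in each row) are immediate from the definitions: strict inequalities are weakened to non-strict ones, and negativity of real parts is weakened to non-positivity. The vertical implications from the second row to the third are equally immediate, obtained by specializing to $D=I\in\mathcal{D}_+$.

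For the two vertical implications from the first row to the second, I would fix an arbitrary $D \in \mathcal{D}_+$ and invoke the assumption. Namely, if $A$ is diagonally D-stable on $S$ (respectively, diagonally D-semistable on $S$), then there exists $P \in \mathcal{D}_+$ with
\[
P(AD) + (AD)^\trans P = PAD + DA^\trans P < 0 \text{ on } S \quad (\text{resp. } \le 0 \text{ on } S).
\]
Since $\im(AD) \subseteq \im A \subseteq S$ and $P \in \mathcal{D}_+$ is positive definite on all of $\R^n$ (and thus in particular on $S$), the hypotheses of Proposition~\ref{pro:lyap_on_S} are satisfied for the matrix $AD$. Hence $AD$ is stable (respectively, semistable) on $S$. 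Because $D \in \mathcal{D}_+$ was arbitrary, we conclude that $A$ is D-stable (respectively, D-semistable) on $S$.

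No real obstacle arises; the only point to verify is that the hypotheses of Proposition~\ref{pro:lyap_on_S} are met, namely that $\im(AD) \subseteq S$ (which uses $\im A \subseteq S$) and that $P$ is positive definite on $S$ (which is automatic, since diagonal positive matrices are positive definite on the whole ambient space). In particular, we do not need to produce a new matrix certifying positivity on $S$: the $P \in \mathcal{D}_+$ furnished by diagonal D-(semi)stability already suffices.
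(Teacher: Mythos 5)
Your proposal is correct and follows essentially the same route as the paper: the horizontal and bottom vertical implications are immediate, and the top vertical implications are obtained by fixing $D \in \mathcal{D}_+$ and applying Proposition~\ref{pro:lyap_on_S} to $AD$ with the $P \in \mathcal{D}_+$ furnished by diagonal D-(semi)stability. The paper states this in one line; you merely spell out the (correct) verification that $\im(AD) \subseteq S$ and that $P$ is positive definite on $S$.
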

\begin{proof}
Obviously, the implications from left to right hold.

The implications from the first to the second row follow immediately from Proposition~\ref{pro:lyap_on_S}.

Finally, the implications from the second row to the third row are trivial.
\end{proof}



\section{Main results} \label{sec:main}


In the following, we say that an equilibrium $x^*$ is {\em linearly stable} (in its stoichiometric class $x^*+S$)
if the Jacobian matrix $J(x^*)$ is stable on $S$.
In short:
\begin{center}
$x^*$ is linearly stable (in $x^* + S$), if $J(x^*)$ is stable on $S$.
\end{center}    

Analogously, we say that $x^*$ is diagonally D-stable/diagonally stable/D-stable (in $x^*+S$)
if $J(x^*)$ is diagonally D-stable/diagonally stable/D-stable on $S$.


\subsection{Linear stability for mass-action systems}

We start by showing that,
for {\em classical} mass-action systems (where $\tilde{y}=y$),
complex-balanced equilibria are diagonally D-stable (and hence diagonally stable/D-stable/linearly stable) in their stoichiometric classes,
and not just locally asymptotically stable (as shown via the classical entropy-like Lyapunov function).

Recently, linear stability of complex-balanced equilibria was shown in~\cite[Theorem~15.2.2.]{Feinberg2019}.
In fact, the author proved {\em diagonal stability},
without noting it and using an ad-hoc inequality.
Here, we even show {\em diagonal D-stability},
thereby using the negative semi-definiteness of the Laplacian matrix of an undirected graph.

\begin{theorem} \label{prop:YequalsYtilde}
Consider a weakly reversible chemical reaction network. 
Then, for all rate constants, complex-balanced equilibria are linearly stable (in their stoichiometric classes).
In fact, they are diagonally D-stable (and hence also diagonally stable, D-stable, and linearly stable).
\end{theorem}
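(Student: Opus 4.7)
My plan is to establish diagonal D-stability of $J(x^*)$ on $S$ directly; the other three stability notions then follow from Proposition~\ref{pro:stab_implications_on_S}. Since $\tilde Y = Y$, the Jacobian at a complex-balanced equilibrium $x^*$ factors as $J(x^*) = Y M Y^\trans \diag(x^*)^{-1}$, where $M := A_k \diag\bigl((x^*)^Y\bigr)$. Given $D \in \mathcal{D}_+$, I would try the Lyapunov candidate $P := D \diag(x^*)^{-1} \in \mathcal{D}_+$. Because diagonal matrices commute, a short computation yields
\[
P J(x^*) D + D J(x^*)^\trans P \;=\; \bigl(D\diag(x^*)^{-1}\bigr)\, Y(M+M^\trans)Y^\trans \,\bigl(D\diag(x^*)^{-1}\bigr),
\]
so the whole problem collapses to understanding the symmetric matrix $M+M^\trans$.

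The key structural step, and the linchpin of the argument, is to show that $M+M^\trans \preceq 0$ with kernel equal to $\mathrm{span}\{\mathbf{1}_{V_c} : c \text{ a connected component of } G\}$. Writing $\psi := (x^*)^Y > 0$, the off-diagonal entries of $M+M^\trans$ are $(A_k)_{i,i'}\psi_{i'} + (A_k)_{i',i}\psi_{i} = k_{i'\to i}\psi_{i'} + k_{i\to i'}\psi_i \geq 0$. Complex-balancing gives $M \mathbf{1} = A_k \psi = 0$, while the Laplacian identity $\mathbf{1}^\trans A_k = 0$ yields $M^\trans \mathbf{1} = 0$. Consequently $-(M+M^\trans)$ has zero row sums and nonpositive off-diagonal entries, so it is the Laplacian of an undirected graph with nonnegative edge weights $k_{i\to i'}\psi_i + k_{i'\to i}\psi_{i'}$ on the edge $\{i,i'\}$. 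Negative semidefiniteness and the kernel description then follow from standard facts about undirected graph Laplacians.

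To finish, I would prove strict negative definiteness on $S$. Suppose $v \in S$ gives $v^\trans (PJ(x^*)D + DJ(x^*)^\trans P)v = 0$; setting $w := Y^\trans D \diag(x^*)^{-1} v$, negative semidefiniteness forces $w \in \ker(M+M^\trans)$, so $w$ is constant on each connected component. Under weak reversibility, the vectors $y(j)-y(i)$ for $i,j$ in the same component span $S$ (any such difference telescopes along a directed path), so constancy of $w$ implies $\bigl(y(j)-y(i)\bigr)^\trans D\diag(x^*)^{-1} v = 0$ for all such $i,j$, i.e.\ $D\diag(x^*)^{-1} v \in S^\perp$. Pairing with $v \in S$ gives $v^\trans D\diag(x^*)^{-1} v = 0$, and the positivity of the diagonal matrix $D\diag(x^*)^{-1}$ forces $v=0$.

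I expect the graph-Laplacian identification in the second paragraph to be the main conceptual obstacle: it is precisely there that complex-balancing is used, guaranteeing that the diagonal of $-(M+M^\trans)$ matches the off-diagonal row sums so that the matrix is a bona fide Laplacian. Everything afterwards — choosing $P = D\diag(x^*)^{-1}$ so that all diagonal factors commute out of the quadratic form, and then using weak reversibility to pass from ``constant on components'' to ``orthogonal to $S$'' — is a routine sequence of linear-algebraic manipulations.
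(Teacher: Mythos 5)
Your proposal is correct and follows essentially the same route as the paper: the same Lyapunov candidate $P = D\diag(x^*)^{-1}$, the same reduction to the symmetric matrix $A_{k^*}+A_{k^*}^\trans$ (your $M+M^\trans$), and the same identification of its negative as an undirected graph Laplacian, with complex balancing supplying the vanishing row sums. Your closing kernel argument via constancy on connected components is merely a rephrasing of the paper's factorization $A_{\bar k} = -I_{E'}\diag(\bar k)\,I_{E'}^\trans$ and the resulting identity $S^\perp = \ker(I_{E'}^\trans Y^\trans)$.
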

\begin{proof}
Let $k \in \R^E_+$ and $x^* \in \Zk$. We show that the corresponding Jacobian matrix $J=J(x^*)$ is diagonally D-stable on $S$. By Proposition~\ref{pro:stab_implications_on_S}, all other conclusions follow.

Let $D \in \mathcal{D}_+$. 
We show that there exists $P = \diag((x^*)^{-1})D \in \mathcal{D}_+$ 
such that $H = P J D + DJ^\trans P<0$ on $S$. 
Let $k^* \in \R^E_+$ be defined by $k^*_{i \to i'}=k_{i \to i'} \, (x^*)^{y(i)}$ for $(i \to i') \in E$. 
Then, $A_k \diag((x^*)^Y) = A_{k^*}$.
Using~\eqref{jac} for the Jacobian matrix $J$, we have
\[
H = P \, Y \left(A_{k^*} + A_{k^*}^\trans \right)Y^\trans P.
\]
Now, let $A_{\bar k} = A_{k^*} + A_{k^*}^\trans$ and hence $H = P \, Y A_{\bar k} \, Y^\trans P$. 
The symmetric matrix $A_{\bar k}$ is the Laplacian matrix of a labeled undirected graph $\widebar G_{\bar k}$ with $\widebar G=(V,\widebar E)$ and $\bar k \in \R^{\widebar E}_+$. 
In particular, $A_{\bar{k}} = -I_{E'} \diag(\bar k) \, I_{E'}^\trans$ for some directed version $E'$ of the undirected edges $\widebar E$ and the corresponding incidence matrix $I_{E'}$ (with $\im I_{E'} = \im I_E$);
see e.g.~\cite{BrualdiRyser1991}.
Hence, $A_{\bar k} \le 0$ and $H = - P \, Y  I_{E'} \diag(\bar k) \, I_{E'}^\trans \, Y^\trans P \le 0$ on $S$. 

Suppose $v^\trans Hv=0$ for some $v \in S$.
Then, $I_{E'}^\trans \, Y^\trans Pv = 0$.
Now, $S =  \im (Y I_E) = \im (Y I_{E'})$, $S^\perp = \ker (I_{E'}^\trans \, Y^\trans)$, and hence $Pv \in S^\perp$.
Clearly, $v^\trans P v = 0$ which finally implies $v=0$, since $P>0$.

%

Hence, $H<0$ on $S$, 
and $J$ is diagonally D-stable on $S$.
\end{proof}

In the result above, we prove {\em diagonal stability} of $J$ on $S$, 
by providing a diagonal, positive definite matrix $P$ such that $PJ+J^\trans P < 0$ on $S$.
This property implies the existence of a function with separated variables $L(x) = \sum_{i=1}^nL_i(x_i)$ 
that serves as a Lyapunov function for showing the asymptotic stability of the complex-balanced equilibrium $x^*\in \R^n_+$ (in its stoichiometric class). 


As an example, consider a scaled version of the classical entropy-like Lyapunov function
$L(x) = \sum_{i=1}^n p_i \, x_i^* \, [x_i(\log(x_i/x_i^*)-1)+x_i^*]$, where $P = \diag(p_1,\ldots,p_n)$,
and the function $g(x) = \nabla L(x) f(x)$, where $\dd{x}{t} = f(x)$.
Then, the gradient of $g$ at $x^*$ vanishes, and the Hessian matrix of $g$ at $x^*$ is $H = PJ+J^\trans P$ with $H < 0$ on $S$.
Hence, in $x^* + S$, the function $g$ has a local maximum at $x^*$ with $g(x^*)=0$. 
That is, $L$ is a Lyapunov function at $x^*$, and $x^*$ is asymptotically stable (in $x^* + S$).


\subsection{Linear stability implies uniqueness}

Now, we turn to {\em generalized} mass-action systems.
We show that linear stability of complex-balanced equilibria implies uniqueness (in their stoichiometric classes).
We start with a useful technical result.

\begin{lemma} \label{lem:kerLaplacian}
Consider a weakly reversible generalized mass-action system, and let $x^* \in Z_k$. 
Then, $\ker (A_k \diag((x^*)^{\widetilde Y})) = \ker I_E^\trans$.
\end{lemma}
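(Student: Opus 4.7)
The plan is to exploit the observation that $A_k\diag((x^*)^{\widetilde Y})$ is itself the Laplacian matrix of the digraph $G$ with rescaled edge labels $\kappa_{i\to i'}:=k_{i\to i'}\,(x^*)^{\tilde y(i)}$, and that under this relabeling the complex-balancing hypothesis $A_k(x^*)^{\widetilde Y}=0$ takes the simple form $A_\kappa\mathbf{1}=0$.

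First, I would verify directly from the explicit entries of $A_k$ given in the footnote that column-scaling by $\diag((x^*)^{\widetilde Y})$ produces exactly the Laplacian of $G$ with new rate constants $\kappa\in\R^E_+$; that is, $A_k\diag((x^*)^{\widetilde Y})=A_\kappa$. This works because the rescaling factor $(x^*)^{\tilde y(i)}$ depends only on the source $i$ of an edge, so both the off-diagonal entries $k_{i\to i'}\to \kappa_{i\to i'}$ and the diagonal entries $-\sum_{i\to i''}k_{i\to i''}\to -\sum_{i\to i''}\kappa_{i\to i''}$ transform consistently. The identity $A_k(x^*)^{\widetilde Y}=A_\kappa\mathbf{1}$ then turns complex-balancing into the statement that, on top of the columns (as is true for every Laplacian), also the rows of $A_\kappa$ sum to zero.

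Since $G$ is weakly reversible, its weak and strong components coincide, say $C_1,\dots,C_\ell$, and there are no edges between distinct $C_j$'s. Consequently $\ker I_E^\trans$ is spanned by the component indicators $\mathbf{1}_{C_1},\dots,\mathbf{1}_{C_\ell}$ and has dimension $\ell$. The matrix $A_\kappa$ is block-diagonal after permutation, so each vector $A_\kappa\mathbf{1}_{C_j}$ is supported on $C_j$; the identity $0=A_\kappa\mathbf{1}=\sum_j A_\kappa\mathbf{1}_{C_j}$ combined with the disjointness of supports forces $A_\kappa\mathbf{1}_{C_j}=0$ for every $j$. This yields
\[
\ker I_E^\trans \;\subseteq\; \ker A_\kappa \;=\; \ker\!\bigl(A_k\diag((x^*)^{\widetilde Y})\bigr).
\]

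Equality will then follow from a dimension count: since $\diag((x^*)^{\widetilde Y})$ is invertible, the right-hand kernel has the same dimension as $\ker A_k$, and the classical fact that for a weakly reversible digraph the Laplacian has right kernel of dimension equal to the number of strongly connected components gives $\dim\ker A_k=\ell$. Hence both kernels have dimension $\ell$ and the inclusion above is actually an equality. The only slightly delicate point is invoking this dimension formula for $\ker A_k$ cleanly (via a Matrix-Tree / Perron--Frobenius argument applied to each strongly connected block); everything else is essentially the bookkeeping observation that multiplying by $\diag((x^*)^{\widetilde Y})$ rebrands $A_k$ as a Laplacian $A_\kappa$ for which complex-balancing becomes $A_\kappa\mathbf{1}=0$.
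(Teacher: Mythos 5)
Your proof is correct and follows essentially the same route as the paper's: rewriting $A_k\diag((x^*)^{\widetilde Y})$ as the Laplacian $A_{k^*}$ of a relabeled digraph, using the block structure over connected components together with complex balancing to show the component indicators lie in its kernel, and concluding equality with $\ker I_E^\trans$ by the standard dimension count for Laplacians of weakly reversible digraphs. The only cosmetic difference is that you route the dimension argument through $\ker A_k$ via invertibility of $\diag((x^*)^{\widetilde Y})$, whereas the paper applies the same kernel-dimension fact directly to $A_{k^*}$.
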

\begin{proof}
The dimension of $\ker I_E^\trans$ equals the number of connected components of the graph $G=(V,E)$,
and the characteristic vectors of the vertex sets of the connected components form a basis.

Let $k^* \in \R^E_+$ be defined by $k^*_{i \to i'} = k_{i \to i'} \, (x^*)^{\tilde{y}(i)}$ for $(i \to i') \in E$. 
Then, $A_k \diag((x^*)^{\widetilde Y}) = A_{k^*}$. 
Clearly, $A_{k^*}$ is the Laplacian matrix of the labeled directed graph $G_{k^*}$, 
and the dimension of its kernel also equals the number of connected components of $G$;
see e.g.~\cite[Section~4]{MuellerRegensburger2014} and the references therein.

By definition, $x^* \in Z_k$ if $A_k (x^*)^{\widetilde Y} = 0$. 
Due to the block structure of the Laplacian matrix, 
the characteristic vectors of the vertex sets of the connected components are in the kernel of $A_k \diag((x^*)^{\widetilde Y})$.
(As an example, consider a strongly connected graph $G$, 
and let $1^V$ be the characteristic vector of $V$.
Then, $A_k \diag((x^*)^{\widetilde Y}) \, 1^V = A_k (x^*)^{\widetilde Y} = 0$.)
Altogether, $\ker (A_k \diag((x^*)^{\widetilde Y})) = \ker I_E^\trans$.
\end{proof}

\begin{theorem} \label{thm:main_injectivity}
Consider a weakly reversible generalized chemical reaction network. 
If, for all rate constants, complex-balanced equilibria are linearly stable (in their stoichiometric classes), 
then they are unique (in their stoichiometric classes).
\end{theorem}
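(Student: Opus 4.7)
The plan is to prove the contrapositive. Assuming that uniqueness fails, I will exhibit rate constants $k$ and an equilibrium $x^*\in Z_k$ together with a nonzero $v\in\ker J(x^*)\cap S$. Since $A_k=I_E\diag(k)(I_E^s)^\trans$ factors through $I_E$, formula~\eqref{jac} shows $\im J(x^*)\subseteq \im(Y I_E)=S$, so the restriction $J(x^*)|_S\colon S\to S$ is well defined; producing a nonzero kernel element in $S$ then forces $0$ to be an eigenvalue of $J(x^*)|_S$, contradicting linear stability on $S$.

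By Proposition~\ref{pro:injectivity}, failure of uniqueness furnishes nonzero vectors $v\in S$ and $\tilde w\in\widetilde S^\perp$ with $\sign(v)=\sign(\tilde w)$. The key technical move is to convert this sign matching into the pointwise algebraic identity $\diag(x^*)\,\tilde w=v$ for a suitable positive $x^*$. I define $x^*_i=v_i/\tilde w_i$ whenever $\tilde w_i\neq 0$ and $x^*_i=1$ otherwise; the sign equality makes $x^*$ strictly positive, and it also forces $v_i=0$ whenever $\tilde w_i=0$, so indeed $\diag(x^*)\,\tilde w=v$. By Proposition~\ref{pro:k}, there exist rate constants $k\in\R^E_+$ such that $x^*\in Z_k$.

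It then remains to verify that $v\in\ker J(x^*)$. Using $\diag((x^*)^{-1})v=\tilde w$ in formula~\eqref{jac}, I obtain
\[
J(x^*)\,v \;=\; Y\,A_k\,\diag\bigl((x^*)^{\widetilde Y}\bigr)\,\widetilde Y^\trans\,\tilde w .
\]
The condition $\tilde w\in\widetilde S^\perp$ is exactly $I_E^\trans\widetilde Y^\trans\tilde w=0$, so $\widetilde Y^\trans\tilde w\in\ker I_E^\trans$. Lemma~\ref{lem:kerLaplacian} identifies $\ker I_E^\trans$ with $\ker\bigl(A_k\diag((x^*)^{\widetilde Y})\bigr)$, and hence $J(x^*)\,v=0$. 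Combined with $v\in S\setminus\{0\}$, this yields the desired obstruction to linear stability.

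I do not expect a serious obstacle. The argument is essentially bookkeeping once the conceptual step of lifting the sign criterion of Proposition~\ref{pro:injectivity} to the equality $\diag(x^*)\,\tilde w=v$ is in place; after that, Lemma~\ref{lem:kerLaplacian} produces the singular vector of the Jacobian in a single line, and Proposition~\ref{pro:k} provides the rate constants that make $x^*$ a complex-balanced equilibrium.
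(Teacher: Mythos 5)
Your proposal is correct and follows essentially the same route as the paper: contrapositive via Proposition~\ref{pro:injectivity}, construction of $x^*$ from the sign-matched pair, Proposition~\ref{pro:k} for the rate constants, and Lemma~\ref{lem:kerLaplacian} to get $J(x^*)v=0$ for $0\neq v\in S$. The only (harmless) cosmetic differences are that you spell out the construction of $x^*$ explicitly and conclude directly that $0$ is an eigenvalue of $J(x^*)|_S$, whereas the paper invokes Proposition~\ref{pro:lyap_on_S}.
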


\begin{proof}
Assume that, for some $k' \in \R^E_+$, there exists a stoichiometric class with at least two complex-balanced equilibria. 
Then, by Proposition~\ref{pro:injectivity}, there exist vectors $0\neq u \in \widetilde S^\perp$ and $0\neq v \in S$ with $\sign(u) = \sign(v)$. 
Now, $\widetilde S = \im (\widetilde Y I_E)$, $\widetilde S^\perp = \ker (I_E^\trans \, \widetilde Y^\trans)$, 
and hence $I_E^\trans \, \widetilde Y^\trans u = 0$.
Let $x^*\in\R^n_+$ be such that $u = \diag((x^*)^{-1}) v$, and let $k\in\R^E_+$ be such that $x^*\in Z_k$, see Proposition~\ref{pro:k}. 
Then, 
\[
I_E^\trans \, \widetilde Y^\trans \diag((x^*)^{-1}) v = 0 ,
\]
that is, $\widetilde Y^\trans \diag((x^*)^{-1}) v \in \ker I_E^\trans = \ker (A_k \diag((x^*)^{\widetilde Y}))$,
the latter by Lemma~\ref{lem:kerLaplacian}.
Using~\eqref{jac} for the Jacobian matrix $J$, we have
\begin{align*}
Jv = Y \underbrace{A_k \diag((x^*)^{\widetilde Y})} \, \underbrace{\widetilde Y^\trans \diag((x^*)^{-1}) v} = 0 .
\end{align*}
Hence, for all $P=P^\trans>0$, we have $v^\trans(PJ+J^\trans P)v=0$, 
and by Proposition~\ref{pro:lyap_on_S}, $J$ is not stable on $S$. 
\end{proof}

As can be demonstrated by the generalized reaction network $\mathsf{X} (\mathsf{X}) \rightleftharpoons \mathsf{Y} (\mathsf{0})$, 
linear stability implies uniqueness, but not existence in every stoichiometric class.


\subsection{The network is a cycle} \label{subsec:main_cycle}

In the following result, we characterize diagonal stability and linear stability of complex-balanced equilibria, 
provided that the network is a cycle.

\begin{theorem} \label{thm:main_cycle}
Consider a \emph{cyclic} generalized chemical reaction network,
and let $A = Y A_{k=1} \widetilde{Y}^\trans$.
The following implications hold:
\begin{center}
\begin{tabular}{ccc} 
\begin{tabular}{c}
For all rate constants, \\ complex-balanced equilibria \\ are diagonally stable \\ (in their stoichiometric classes).
\end{tabular} 
& $\Rightarrow$ &
\begin{tabular}{c}
For all rate constants, \\ complex-balanced equilibria \\ are linearly stable \\ (in their stoichiometric classes).
\end{tabular} \\
\\[-1ex]
$\Updownarrow$ & & $\Updownarrow$  \\
\\[-1ex]
\begin{tabular}{c}
$A$ is diagonally D-stable on $S$.
\end{tabular}
& $\Rightarrow$ &
\begin{tabular}{c}
$A$ is D-stable on $S$.
\end{tabular}
\end{tabular}
\end{center}

\end{theorem}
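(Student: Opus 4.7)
My plan is built around one structural observation about cycles: at any complex-balanced equilibrium, the scaled Laplacian collapses to a positive multiple of $A_{k=1}$. Writing the cycle as $1\to 2\to\cdots\to m\to 1$, the equation $A_k\,(x^*)^{\widetilde Y}=0$ reads $k_{i-1\to i}(x^*)^{\tilde y(i-1)}=k_{i\to i+1}(x^*)^{\tilde y(i)}$ at every vertex $i$ (indices mod~$m$), so the scaled rate constants $k^*_{i\to i+1}:=k_{i\to i+1}(x^*)^{\tilde y(i)}$ all take a common value $c>0$. Consequently $A_k\diag((x^*)^{\widetilde Y})=A_{k^*}=c\,A_{k=1}$, and plugging into~\eqref{jac} yields
\[
J(x^*) \;=\; c\,Y A_{k=1} \widetilde Y^\trans \diag((x^*)^{-1}) \;=\; c\,A\,D \;=\; A\,(cD),
\qquad D=\diag((x^*)^{-1})\in\mathcal{D}_+.
\]

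Next, I would argue that this one-parameter family sweeps out \emph{all} of $\{AM:M\in\mathcal{D}_+\}$ as $(k,x^*)$ varies. By Proposition~\ref{pro:k}, any $x^*\in\R^n_+$ is a complex-balanced equilibrium for a suitable $k$; once $x^*$ is fixed, rescaling $k$ by a positive scalar rescales $c$ by the same factor; and the map $(c,D)\mapsto cD$ from $\R_+\times\mathcal{D}_+$ to $\mathcal{D}_+$ is surjective. Hence the set of Jacobian matrices at complex-balanced equilibria is exactly $\{AM:M\in\mathcal{D}_+\}$.

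Given this, the two vertical equivalences fall out by unwinding definitions. For the right-hand column, stability of $J(x^*)$ on $S$ for every admissible $(k,x^*)$ is the same as stability of $AM$ on $S$ for every $M\in\mathcal{D}_+$, which is by definition D-stability of $A$ on $S$. For the left-hand column, diagonal stability of $J(x^*)$ on $S$ for every $(k,x^*)$ amounts to: for every $M\in\mathcal{D}_+$ there exists $P\in\mathcal{D}_+$ with $P(AM)+(AM)^\trans P=PAM+MA^\trans P<0$ on $S$, which is precisely diagonal D-stability of $A$ on $S$. The horizontal implications in each row are then immediate from Proposition~\ref{pro:stab_implications_on_S}.

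I do not expect a serious obstacle: the only step requiring insight is the cycle-specific reduction in the first paragraph, and after that everything is a clean translation between the matrix-stability notions already set up in Section~\ref{sec:stability_notions}. The main thing to double-check is the surjectivity claim---that every positive diagonal matrix really arises as some $c\diag((x^*)^{-1})$---but this follows directly from Proposition~\ref{pro:k} together with the freedom to rescale $k$.
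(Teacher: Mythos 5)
Your proposal is correct and follows essentially the same route as the paper: the key reduction $A_k\diag((x^*)^{\widetilde Y})=c\,A_{k=1}$, hence $J(x^*)=c\,A\,\diag((x^*)^{-1})$, combined with Proposition~\ref{pro:k} to realize every positive diagonal matrix, is exactly the paper's argument (the paper simply ignores the scalar $c$ since stability notions are invariant under positive scaling, while you absorb it into the diagonal factor). The only cosmetic difference is that the paper derives the first-row horizontal implication directly from Proposition~\ref{pro:lyap_on_S} rather than via the second row.
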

\begin{proof}
The implication in the first (respectively, second) row follows immediately from Proposition~\ref{pro:lyap_on_S} (respectively, Proposition~\ref{pro:stab_implications_on_S}).

To prove the two equivalences,
let $k \in \R^E_+$ and $x^* \in \Zk$. 
Since $G=(V,E)$ is a cycle, the quantity $c = k_{i \to i'} \, (x^*)^{\tilde{y}(i)}$ is the same for all $(i \to i')\in E$.
In particular, $A_k \diag((x^*)^{\widetilde{Y}}) = c \, A_{k=1}$.
Using~\eqref{jac} for the Jacobian matrix $J$, we have
\[
J = c \, Y A_{k=1} \widetilde{Y}^\trans D
\]
with $D = \diag((x^*)^{-1})$.
Finally, recall that every $x^*\in\R^n_+$ is a complex-balanced equilibrium for some $k\in\R^E_+$,
see Proposition~\ref{pro:k}.
Hence, every $D \in \mathcal{D}_+$ is of the form $D = \diag((x^*)^{-1})$ for some $k$,
and the equivalences follow.
\end{proof}


\subsection{The network is weakly reversible} \label{subsec:main_wr}

In the previous subsection, we characterized diagonal stability and linear stability of complex-balanced equilibria, provided that the network is a cycle. In this subsection, we extend those results to weakly reversible networks, but instead of equivalent we only give necessary conditions for diagonal and linear stability. 
We start with a useful technical result.

\begin{lemma} \label{lem:approx_a_cycle}
Consider a weakly reversible generalized chemical reaction network, let $C$ be a cycle of the network, and let $x^* \in \R_+^n$. Then, there exists a family of rate constants $k^\varepsilon \in \R^E_+$ (with $\varepsilon>0$) such that $x^* \in Z_{k^\varepsilon}$ for every $\varepsilon>0$ and $J^\varepsilon \to Y A_{k=1}^C \widetilde{Y}^\trans \diag((x^*)^{-1})$ as $\varepsilon \to 0$, where $A_{k=1}^C$ is the Laplacian matrix of the cycle $C$ with all rate constants set to $1$.
\end{lemma}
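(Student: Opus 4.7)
The plan is to reduce the problem to constructing a family of nonnegative edge weights $\kappa^\varepsilon \in \R^E_+$ on the underlying digraph $G$ such that the associated Laplacian matrices $A_{\kappa^\varepsilon}$ satisfy $A_{\kappa^\varepsilon} \mathbf{1} = 0$ (i.e., $\kappa^\varepsilon$ is a \emph{circulation} on $G$) and $A_{\kappa^\varepsilon} \to A_{k=1}^C$ as $\varepsilon \to 0$. The rate constants $k^\varepsilon$ of the statement are then recovered via the rescaling $k^\varepsilon_{i \to i'} = \kappa^\varepsilon_{i \to i'}/(x^*)^{\tilde y(i)}$, which is strictly positive whenever $\kappa^\varepsilon$ is.

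First, I would observe the identity $A_k \diag((x^*)^{\widetilde Y}) = A_\kappa$, where $\kappa_{i \to i'} = k_{i \to i'}(x^*)^{\tilde y(i)}$ for each $(i \to i') \in E$. In particular, $x^* \in Z_k$ if and only if $A_\kappa \mathbf{1} = 0$, meaning that at every vertex the total incoming weight equals the total outgoing weight. Substituting into~\eqref{jac}, the Jacobian at $x^*$ takes the form $J = Y A_\kappa \widetilde Y^\trans \diag((x^*)^{-1})$, so it indeed suffices to produce circulations $\kappa^\varepsilon \in \R^E_+$ whose Laplacians tend to $A_{k=1}^C$.

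Two natural circulations are at hand. The indicator $\chi^C \in \{0,1\}^E$ of the edge set of the cycle $C$ is a (nonnegative) circulation, and its Laplacian is precisely $A_{k=1}^C$: at every vertex of $C$ the unique incoming and unique outgoing edge of $C$ carry weight $1$, and vertices outside $C$ contribute nothing. The key step is to exhibit a \emph{strictly positive} circulation $\sigma \in \R^E_+$ on the whole network. This is where weak reversibility enters: each connected component of $G$ is strongly connected, hence every edge lies on some directed cycle, and summing the indicator vectors of a family of such cycles covering all edges yields a circulation that is strictly positive on every edge.

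With $\sigma$ in hand, I would set $\kappa^\varepsilon = \chi^C + \varepsilon \sigma \in \R^E_+$; this is strictly positive for $\varepsilon > 0$, and by linearity of the Laplacian in the rates
\[
A_{\kappa^\varepsilon} \mathbf{1} = A_{k=1}^C \mathbf{1} + \varepsilon A_\sigma \mathbf{1} = 0,
\]
so $x^* \in Z_{k^\varepsilon}$ after undoing the rescaling. Moreover $A_{\kappa^\varepsilon} = A_{k=1}^C + \varepsilon A_\sigma \to A_{k=1}^C$ as $\varepsilon \to 0$, which yields the claimed convergence of the Jacobians. The only genuine obstacle is the existence of the positive circulation $\sigma$, but this is a routine consequence of strong connectedness (via an explicit cycle-cover construction or LP duality), and all remaining steps are bookkeeping.
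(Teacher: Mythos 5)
Your proposal is correct and follows essentially the same route as the paper: the paper also rescales by $(x^*)^{-\tilde y(i)}$ and sets $k^\varepsilon = k^C + \varepsilon\sum_{C'\neq C}k^{C'}$, where the sum over all other cycles plays exactly the role of your strictly positive circulation $\sigma$ (positivity on every edge again coming from strong connectedness). The only cosmetic difference is that your $\sigma$ is also positive on the edges of $C$ itself, which changes nothing in the limit.
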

\begin{proof}
For a cycle $C'$, let $k^{C'} \in \R^E_{\geq0}$ be defined by
\begin{align*}
k^{C'}_{i \to i'} = \frac{1}{(x^*)^{\tilde{y}(i)}}\cdot
\begin{cases}
1, & \text{ if } (i \to i') \in C', \\
0,           & \text{ if } (i \to i') \notin C' .
\end{cases}
\end{align*}
For $\varepsilon>0$, let $k^\varepsilon = k^C + \varepsilon \sum_{C'\neq C} k^{C'}$, 
where the summation is over all cycles $C'$ of $G$, except for the given cycle $C$. 
Then, $x^* \in Z_{k^\varepsilon}$ and
\[
A_{k^\varepsilon} \diag((x^*)^{\widetilde Y}) = A_{k=1}^C+\varepsilon\sum_{C'\neq C}A_{k=1}^{C'}.
\]
Using~\eqref{jac} for the Jacobian matrix~$J^\varepsilon$, 
we obtain the desired limit as $\varepsilon \to 0$.
\end{proof}

\begin{theorem} \label{thm:main_wr}
Consider a weakly reversible generalized chemical reaction network,
and, for a cycle $C$ of the network, let $A^C = Y A^C_{k=1} \widetilde{Y}^\trans$
and $S^C$ be the corresponding stoichiometric subspace.
The following implications hold:
\begin{center}
\begin{tabular}{ccc}
\begin{tabular}{c}
For all rate constants, \\ complex-balanced equilibria \\ are diagonally stable \\ (in their stoichiometric classes).
\end{tabular}
& $\Rightarrow$ &
\begin{tabular}{c}
For all rate constants, \\ complex-balanced equilibria \\ are linearly stable \\ (in their stoichiometric classes).
\end{tabular} \\
\\[-1ex]
$\Downarrow$ & & $\Downarrow$  \\
\\[-1ex]
\begin{tabular}{c}
For all cycles $C$, \\
$A^C$ is diagonally D-semistable on $S^C$.
\end{tabular}
& $\Rightarrow$ &
\begin{tabular}{c}
For all cycles $C$, \\
$A^C$ is D-semistable on $S^C$.
\end{tabular}
\end{tabular}
\end{center}

\end{theorem}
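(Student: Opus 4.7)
The two horizontal implications are immediate applications of Section~\ref{sec:stability_notions}: the top arrow is Proposition~\ref{pro:stab_implications_on_S} applied pointwise at each complex-balanced equilibrium, and the bottom arrow is the same proposition applied to each $A^C D$ with $D\in\mathcal{D}_+$. The substantive content is in the two vertical arrows, both of which must \emph{localize} a global (network-wide) stability hypothesis to the dynamics of a single cycle; the natural device for this is Lemma~\ref{lem:approx_a_cycle}.

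\textbf{Common setup for the vertical arrows.} Fix a cycle $C$ and an arbitrary $D\in\mathcal{D}_+$. Choose $x^*\in\R^n_+$ with $D=\diag((x^*)^{-1})$ and invoke Lemma~\ref{lem:approx_a_cycle} to obtain rate constants $k^\varepsilon\in\R^E_+$ such that $x^*\in Z_{k^\varepsilon}$ and the Jacobians $J^\varepsilon\to A^C D$ as $\varepsilon\to 0$. The key structural observation is that $\im A^C\subseteq S^C\subseteq S$: writing $S=S^C\oplus W$ for any vector-space complement $W$ and representing $(A^C D)|_S$ in a compatible basis, the operator is block upper-triangular with a zero block on $W$, so its spectrum equals the spectrum of $(A^C D)|_{S^C}$ padded with extra zeros. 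The same is true for its symmetrization relative to any chosen inner product.

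\textbf{Right vertical arrow.} By hypothesis each $J^\varepsilon$ is stable on $S$; continuity of the spectrum under matrix perturbation gives that $(A^C D)|_S$ is semistable, and the block-triangular observation immediately upgrades this to semistability of $(A^C D)|_{S^C}$. Since $D\in\mathcal{D}_+$ was arbitrary, $A^C$ is D-semistable on $S^C$.

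\textbf{Left vertical arrow and main obstacle.} By hypothesis, for each $\varepsilon$ there exists $P^\varepsilon\in\mathcal{D}_+$ with $P^\varepsilon J^\varepsilon+(J^\varepsilon)^\trans P^\varepsilon<0$ on $S$. The inequality is invariant under rescaling $P^\varepsilon$ by a positive scalar, so I normalize (say, to unit maximum diagonal entry), extract a convergent subsequence, and obtain a non-zero diagonal $P^*\ge 0$ with $P^*A^C D+D(A^C)^\trans P^*\le 0$ on $S$, hence on $S^C$. The subtle step---and what I expect to be the main obstacle---is ensuring that $P^*$ actually lies in $\mathcal{D}_+$ rather than only in the closure $\mathcal{D}_{\ge 0}\setminus\{0\}$: a priori some diagonal entries of $P^*$ may vanish in the limit, while diagonal D-semistability demands strictly positive diagonal. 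I expect to resolve this either by a more careful normalization that keeps each coordinate of the sequence bounded away from zero, or by averaging the Lyapunov certificates obtained from several independent choices of $x^*$ (each still producing $A^C D$ in the limit), or by a small additive perturbation $P^*+\eta I$ whose extra contribution is absorbed using the D-semistability of $A^C D$ on $S^C$ already proved in the right arrow. Everything else reduces to Lemma~\ref{lem:approx_a_cycle}, continuity of spectra, and the propositions of Section~\ref{sec:stability_notions}.
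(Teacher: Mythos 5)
Your treatment of the horizontal arrows and of the right vertical arrow is sound and essentially matches the paper: for the latter, the paper also combines Lemma~\ref{lem:approx_a_cycle} with continuity of the spectrum (phrased as a contraposition, but with the same content), and your remark that $\im A^C \subseteq S^C \subseteq S$ lets one pass between semistability on $S$ and on $S^C$ is correct and worth making explicit.

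The left vertical arrow, however, is not proved. You have correctly located the crux --- the normalized certificates $P^\varepsilon$ may converge to a diagonal $P^* \ge 0$ with some zero entries, and a boundary certificate is genuinely weaker: for instance $P^*=\diag(0,1)$ satisfies $P^*A+A^\trans P^*=\diag(0,-2)\le 0$ for $A=\diag(1,-1)$, which is not even semistable --- but none of your three proposed repairs is carried out, and the third one rests on a false premise: D-semistability of $A^CD$ on $S^C$ controls only the spectra of $A^CDD'$, not the sign of the symmetrization $A^CD+D(A^C)^\trans$ on $S^C$, so the extra term $\eta\,(A^CD+D(A^C)^\trans)$ arising from $P^*+\eta I$ cannot be ``absorbed''. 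The second repair is also not available as stated, since changing $x^*$ changes $D=\diag((x^*)^{-1})$ and hence the limit matrix. The paper avoids extracting a limit of the $P^\varepsilon$ altogether: it argues by contradiction, encoding the existence of a diagonal certificate into the single continuous quantity
\[
g(B) \;=\; \min_{\substack{P\in\mathcal{D}_+ \colon \|P\|=1}}\; \max_{\substack{v\in S \colon \|v\|=1}}\; v^\trans(PB+B^\trans P)\,v ,
\]
shows that failure of diagonal D-semistability of $A^CD$ on $S^C$ gives $g(A^CD)>0$ while the diagonal-stability hypothesis gives $g(J^\varepsilon)<0$ for every $\varepsilon$, and then derives $0<g(A^CD)=\lim_{\varepsilon\to0}g(J^\varepsilon)\le 0$ from continuity of $g$ along $J^\varepsilon\to A^CD$. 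To complete your argument along your own lines you would need either to show that the $P^\varepsilon$ can be chosen with diagonal entries uniformly bounded away from zero, or to pass to a scalar invariant of this kind that survives the limit; as written, the left vertical arrow remains open in your proposal.
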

\begin{proof}
The implications in the first (respectively, second) row follows immediately from Proposition~\ref{pro:lyap_on_S} (respectively, Proposition~\ref{pro:stab_implications_on_S}).

Next, we prove the implication in the right column.
Assume there exists a cycle $C$ in $G$ and a matrix $D \in \mathcal{D}_+$ such that $A^CD$ has an eigenvalue with positive real part. 
Let $x^*\in\R^n_+$ be such that $D = \diag((x^*)^{-1})$. 
Further, let $k^\varepsilon \in \R^E_+$ (with $\varepsilon>0$) be a family of rate constants as in Lemma~\ref{lem:approx_a_cycle}, and let $J^\varepsilon$ denote the corresponding Jacobian matrix. 
Since $J^\varepsilon \to A^CD$ as $\varepsilon \to 0$ and, in general, the spectrum of a matrix depends continuously on its entries, the matrix $J^\varepsilon$ has an eigenvalue with positive real part for $\varepsilon>0$ small enough.
That is, the complex-balanced equilibrium $x^*$ is not linearly stable.

Finally, we prove the implication in the left column (by contradiction).
Assume there exists a cycle $C$ in $G$ and a matrix $D \in \mathcal{D}_+$ such that for all $P \in \mathcal{D}_+$ there exists a $w \in S^C$ with
\begin{align}\label{eq:vstar_larger_0}
w^\trans(PA^CD+D(A^C)^\trans P)w>0.
\end{align}
Clearly, the term $v^\trans(PB+B^\trans P)v$ is continuous (in all arguments $v \in S$, $P \in \mathcal{D}_+$, $B \in \R^{n \times n}$), 
and hence the map $g$, defined by
\begin{align*}
g(B) = \min_{\substack{P\in\mathcal{D}_+ \colon \\ \|P\|=1}}
\;
\max_{\substack{v\in S \colon \\ \|v\|=1}}
\;
v^\trans(PB+B^\trans P)v
\end{align*}
is also continuous, since maximum and minimum are taken over compact sets. Since $S^C$ is a subspace of $S$ (and hence $w \in S$), the inequality~\eqref{eq:vstar_larger_0} implies $g(A^CD)>0$.


As above, let $x^*\in\R^n_+$ be such that $D = \diag((x^*)^{-1})$. 
Further, let $k^\varepsilon \in \R^E_+$ (with $\varepsilon>0$) be a family of rate constants as in Lemma~\ref{lem:approx_a_cycle}, and let $J^\varepsilon$ denote the corresponding Jacobian matrix. 
Since, by assumption, all complex-balanced equilibria are diagonally stable, there exists $P^\varepsilon \in \mathcal{D}_+$ such that $P^\varepsilon J^\varepsilon+(J^\varepsilon)^\trans P^\varepsilon<0$ on $S$. 
As a consequence, $g(J^\varepsilon)<0$ for all $\varepsilon>0$, and
\begin{align*}
0 < g(A^CD) = g(\lim_{\varepsilon\to0} J^\varepsilon) = \lim_{\varepsilon\to0}g(J^\varepsilon) \leq 0,
\end{align*}
a contradiction.
\end{proof}

\section{Characterization of D-stability and diagonal stability} \label{sec:characterize_restricted_stabilities}

For both D-stability and diagonal stability, an explicit characterization is available only up to dimension three. 
For arbitrary dimension, we recall the following necessary conditions (see e.g.~\cite[Section 2]{Cross1978}).

\begin{proposition} \label{pro:Dstab_diagstab_Pzeroplus_P}
Let $A \in \R^{n\times n}$. The following statements hold.
\begin{itemize}
\item[(i)] If $A$ is D-stable, then $A$ is a $P_0^+$-matrix (that is, all its signed principal minors are non-negative and at least one of each order is positive).
\item[(ii)] If $A$ is diagonally stable, then $A$ is a $P$-matrix (that is, all its signed principal minors are positive).
\end{itemize}
\end{proposition}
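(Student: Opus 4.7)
The plan is to derive both parts from one stability ingredient, the Stodola necessary condition for Hurwitz polynomials, combined with either a limiting argument on the diagonal scaling (for D-stability) or a restriction to principal submatrices (for diagonal stability). I would begin by recording the following basic fact: if $B \in \R^{m\times m}$ is stable, then its characteristic polynomial
\[
\det(\lambda I - B) = \lambda^m + c_1 \lambda^{m-1} + \cdots + c_m
\]
has all coefficients $c_k > 0$. Indeed, over $\R$ it factors as a product of linear terms $(\lambda + a)$ and quadratic terms $(\lambda^2 + b\lambda + c)$ with $a,b,c > 0$ (the quadratics coming from complex conjugate pairs of eigenvalues with negative real part), and a product of polynomials with positive coefficients has positive coefficients. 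Combined with the identity $c_k = (-1)^k E_k(B)$, where $E_k(B) = \sum_{|\alpha|=k} \det(B[\alpha,\alpha])$ is the sum of all $k\times k$ principal minors of $B$, this reads
\[
(-1)^k E_k(B) > 0 \quad \text{for } k=1,\ldots,m.
\]

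For part~(i), fix $\alpha \subseteq \{1,\ldots,n\}$ of size $k$. Since $D$ is diagonal, $(AD)[\beta,\beta] = A[\beta,\beta]\,D[\beta,\beta]$ for every $\beta$, so
\[
E_k(AD) = \sum_{|\beta|=k} \det(A[\beta,\beta]) \prod_{i\in\beta} d_i .
\]
I would choose $D^\varepsilon \in \mathcal{D}_+$ with $d_i = 1$ for $i \in \alpha$ and $d_i = \varepsilon$ for $i \notin \alpha$. As $\varepsilon \to 0^+$ only the $\beta = \alpha$ summand survives, hence $E_k(AD^\varepsilon) \to \det(A[\alpha,\alpha])$. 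D-stability applied to $D^\varepsilon$ gives Stodola's inequality $(-1)^k E_k(AD^\varepsilon) > 0$ for every $\varepsilon > 0$, and taking the limit yields $(-1)^k \det(A[\alpha,\alpha]) \ge 0$. For the additional requirement that at least one signed principal minor of each order be strictly positive, apply Stodola to $D = I$: stability of $A$ itself gives $(-1)^k E_k(A) > 0$, so at least one of the $k$-th order summands must be strictly positive.

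For part~(ii), let $P \in \mathcal{D}_+$ satisfy $PA + A^\trans P < 0$. Because $P$ is diagonal, the principal submatrix identity $(PA + A^\trans P)[\alpha,\alpha] = P_\alpha A[\alpha,\alpha] + A[\alpha,\alpha]^\trans P_\alpha$ holds, and its left-hand side remains strictly negative definite (it is a principal submatrix of a negative definite matrix). Hence $A[\alpha,\alpha]$ is diagonally stable, so by Proposition~\ref{pro:stab_implications_classical} it is stable, and evaluating the characteristic polynomial at $0$ (that is, the $k = |\alpha|$ case of the Stodola observation above) gives $(-1)^{|\alpha|} \det(A[\alpha,\alpha]) > 0$.

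The main subtlety is conceptual rather than computational and lies in part~(i): Stodola yields strict inequalities for every $D \in \mathcal{D}_+$, but these strict inequalities are tested against a mixed-sign sum, and only non-negativity of the individual signed minors survives the degenerate limit $\varepsilon \to 0$ when $D$ becomes singular. This is the structural reason why D-stability only entails the $P_0^+$ property and not the full $P$-matrix property, while the restriction trick available for diagonal stability avoids passing to the boundary and therefore preserves strict positivity.
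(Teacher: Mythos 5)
Your proof is correct. Note that the paper does not actually prove this proposition; it is stated as a recalled fact with a pointer to Cross (1978), so there is no in-paper argument to compare against. Your two ingredients --- the Stodola/positive-coefficient necessary condition together with the expansion $\det(\lambda I - B) = \sum_k (-1)^k E_k(B)\,\lambda^{n-k}$ in sums of principal minors, plus the degenerate scaling $D^\varepsilon$ for part~(i) and the principal-submatrix restriction of $PA + A^\trans P < 0$ for part~(ii) --- are exactly the standard route found in the cited literature, and all the steps check out: $(AD)[\beta,\beta] = A[\beta,\beta]D[\beta,\beta]$ because $D$ is diagonal, the limit $\varepsilon \to 0^+$ correctly isolates $\det A[\alpha,\alpha]$ and only yields the non-strict inequality, and the strict positivity of one minor per order follows from $D = I$. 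Your closing remark on why D-stability degrades to $P_0^+$ while diagonal stability retains the full $P$-property is also the right structural explanation.
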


It is relatively easy to check that, for dimension two, equivalence holds in the previous result.

\begin{proposition} \label{prop:2by2_characterization}
Let $A \in \R^{2 \times 2}$. The following statements hold.
\begin{itemize}
\item[(i)] The matrix $A$ is D-stable if and only if it is a $P_0^+$-matrix.
\item[(ii)] The matrix $A$ is diagonally stable if and only if it is a $P$-matrix.
\end{itemize}
\end{proposition}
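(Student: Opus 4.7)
The plan is to split the proposition into its two directions. The forward implications (``D-stable $\Rightarrow P_0^+$'' and ``diagonally stable $\Rightarrow P$'') are already given by Proposition~\ref{pro:Dstab_diagstab_Pzeroplus_P}, and hold in any dimension, so both halves reduce to the converse implications in dimension two. The general strategy is to exploit the fact that for $2\times 2$ matrices, stability is characterised completely by the two scalar conditions ``trace $<0$'' and ``determinant $>0$'', which reduces everything to bookkeeping with the entries of $A=\begin{pmatrix}a&b\\c&d\end{pmatrix}$.

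For part~(i), I would unpack the $P_0^+$-condition into the three inequalities $-a\geq 0$, $-d\geq 0$, $ad-bc>0$, together with the requirement that $(a,d)\neq(0,0)$. Then for an arbitrary $D=\diag(D_1,D_2)\in\mathcal{D}_+$, compute $\tr(AD)=aD_1+dD_2$ and $\det(AD)=D_1D_2(ad-bc)$. The determinant condition is immediate, and the trace is strictly negative because at least one of $a,d$ is strictly negative while the other is nonpositive. This gives stability of $AD$ for every $D\in\mathcal{D}_+$, hence D-stability.

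For part~(ii), I would unpack the $P$-condition into $a<0$, $d<0$, and $ad-bc>0$, and seek $P=\diag(p,q)\in\mathcal{D}_+$ with
\[
PA+A^\trans P=\begin{pmatrix}2pa & pb+qc\\ pb+qc & 2qd\end{pmatrix}<0.
\]
The diagonal entries are automatically negative for any $p,q>0$, so the task reduces to making the determinant $4pq\,ad-(pb+qc)^2$ strictly positive. Normalising $q=1$ turns this into a downward-opening quadratic $f(p)=-b^2 p^2+(4ad-2bc)p-c^2$ in $p$ (with the degenerate case $b=0$ handled separately: $f$ is then linear with slope $4ad>0$, so large $p$ works). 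For $b\neq 0$, the maximum is attained at $p^\ast=(2ad-bc)/b^2$, which is positive since $ad>0$ and $ad>bc$ give $2ad-bc=ad+(ad-bc)>0$. A short computation yields $f(p^\ast)=\bigl((2ad-bc)^2-b^2c^2\bigr)/b^2=4ad(ad-bc)/b^2>0$, so the choice $P=\diag(p^\ast,1)$ certifies diagonal stability.

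There is no real obstacle; the only mildly delicate point is showing that some $P\in\mathcal{D}_+$ actually works for diagonal stability, and that is dispatched by the explicit quadratic computation above. The boundary case $b=0$ (and symmetrically $c=0$) must not be forgotten, but it is trivial.
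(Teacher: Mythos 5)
Your proof is correct and complete. Note that the paper itself gives no proof of this proposition --- it is merely asserted as ``relatively easy to check'' following Proposition~\ref{pro:Dstab_diagstab_Pzeroplus_P} --- so there is no argument to compare against; your reduction of the converse directions to the trace/determinant criterion for $2\times 2$ stability in part~(i), and the explicit optimization of the quadratic $f(p)=4ad\,p-(pb+c)^2$ (with the $b=0$ case handled separately) in part~(ii), is exactly the standard computation that the authors are alluding to, and all the details (positivity of $p^\ast$, the factorization $f(p^\ast)=4ad(ad-bc)/b^2$) check out.
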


For dimension three, we have the following characterizations (see~\cite{Cain1976} and~\cite[Theorem 4(c)]{Cross1978}). 
Thereby, for $A \in \R^{n \times n}$, let $M_{ij}$ denote the principal minor (of order two) $a_{ii}a_{jj}-a_{ij}a_{ji}$.

\newpage

\begin{proposition} \label{prop:3by3_characterization}
Let $A \in \R^{3 \times 3}$. The following statements hold.
\begin{itemize}
\item[(i)] The matrix $A$ is D-stable if and only if
\begin{itemize}
\item[(a)] $A$ is a $P_0^+$-matrix and
\item[(b)] the three pairs $(-a_{11},M_{23})$, $(-a_{22},M_{13})$, $(-a_{33},M_{12})$ dominate $-\det A$, that is,
\begin{align*}
\left(\sqrt{-a_{11}M_{23}} + \sqrt{-a_{22}M_{13}} + \sqrt{-a_{33}M_{12}}\right)^2\geq -\det A
\end{align*}
with equality implying that at least one of the three pairs has exactly one member equal to zero.
\end{itemize}
\item[(ii)] The matrix $A$ is diagonally stable if and only if
\begin{itemize}
\item[(a)] $A$ is a $P$-matrix and
\item[(b)] there exists $y\in\R$ such that
\begin{align*}
(a_{13} \, y+a_{31})^2 - 4 \, a_{11} a_{33} \, y &< 0 \text{ and}\\
(b_1 \, y+b_2)^2 - 4 \, M_{12} M_{23} \, y &< 0,
\end{align*}
where $b_1 = a_{12}a_{23}-a_{22}a_{13}$ and $b_2 = a_{21}a_{32}-a_{22}a_{31}$.
\end{itemize}
\end{itemize}
\end{proposition}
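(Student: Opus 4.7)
The plan is to reduce each equivalence to an explicit algebraic stability criterion — Routh--Hurwitz for (i) and Sylvester for (ii) — and then analyze for which $A$ the criterion holds uniformly in the diagonal parameter.

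For (i), write $D = \diag(d_1,d_2,d_3)$ with $d_i > 0$ and let $\lambda^3 + c_1 \lambda^2 + c_2 \lambda + c_3$ be the characteristic polynomial of $AD$. A direct computation gives
\[
c_1 = \sum_i d_i(-a_{ii}), \qquad c_2 = \sum_{i<j} d_i d_j M_{ij}, \qquad c_3 = d_1 d_2 d_3 (-\det A).
\]
By Routh--Hurwitz, $AD$ is stable iff $c_1, c_2, c_3 > 0$ and $c_1 c_2 > c_3$. The positivity conditions, required for every $d > 0$, are equivalent to (a). After dividing the Hurwitz inequality by $d_1 d_2 d_3$, it takes the form
\[
\left(\sum_i d_i(-a_{ii})\right)\!\left(\sum_i \tfrac{M_{jk}}{d_i}\right) > -\det A,
\]
with $\{i,j,k\}=\{1,2,3\}$. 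The Cauchy--Schwarz inequality yields that the infimum of the left-hand side over $d \in \R^3_+$ equals $\bigl(\sum_i \sqrt{(-a_{ii})\, M_{jk}}\bigr)^2$. This produces the domination inequality in (b); the equality-exception clause reflects that the infimum is attained in $\R^3_+$ precisely when every pair $(-a_{ii},M_{jk})$ has both members positive, and otherwise equality is only approached on the boundary (some $d_i \to 0$ or $\infty$), so the Hurwitz inequality still holds strictly at every interior $d$.

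For (ii), set $P = \diag(p_1,p_2,p_3) \in \mathcal{D}_+$ and $M = PA + A^\trans P$. By Sylvester's criterion applied to $-M$, the matrix $A$ is diagonally stable iff there exists $P$ with
\[
-p_i a_{ii} > 0, \qquad 4\, p_i p_j a_{ii} a_{jj} - (p_i a_{ij} + p_j a_{ji})^2 > 0, \qquad \det M < 0,
\]
for all $i<j$. Condition (a) is necessary by Proposition~\ref{pro:stab_implications_classical} together with strictness of the Sylvester inequalities. Rescaling, set $p_2 = 1$ and $y = p_1/p_3$; the $(1,3)$ minor condition, divided by $p_3^2$, is exactly the first inequality of (b). It remains to determine when the $(1,2)$- and $(2,3)$-minor conditions together with $\det M < 0$ admit a common solution $p_2 > 0$ for the given $y$. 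I would expand $\det M$ as a quadratic in $p_2$; its linear coefficient in $p_2$ contains precisely the combinations $a_{12}a_{23} - a_{22}a_{13} = b_1$ and $a_{21}a_{32} - a_{22}a_{31} = b_2$. Combining with the two minor inequalities reduces joint solvability in $p_2$ to a single discriminant condition in $y$, which after simplification is exactly $(b_1 y + b_2)^2 < 4 M_{12} M_{23}\, y$.

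The main obstacle is the elimination step in (ii): verifying that, when $\det M$ is organized by powers of $p_2$, the coefficients match $b_1$ and $b_2$ as stated, and that the resulting feasibility discriminant simplifies to the quoted quadratic in $y$. In (i), the only delicate point is the equality/degeneracy analysis that distinguishes D-stability from D-semistability, depending on whether the Cauchy--Schwarz infimum is attained in the interior or only approached on the boundary of $\R^3_+$.
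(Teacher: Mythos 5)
First, note that the paper does not prove this proposition at all: it is quoted from the literature (Cain~1976 for D-stability, Cross~1978, Thm.~4(c) for diagonal stability), so your attempt has to be judged on its own merits rather than against an in-paper argument. Your treatment of (i) follows what is essentially Cain's route (Routh--Hurwitz for $AD$, then minimize $c_1c_2/(d_1d_2d_3)$ over $d$), and the computation of $c_1,c_2,c_3$ and the lower bound $\bigl(\sum_i\sqrt{(-a_{ii})M_{jk}}\bigr)^2$ are correct. However, your equality analysis contains a genuine slip: you claim the infimum is attained in the interior of $\R^3_+$ \emph{precisely} when every pair $(-a_{ii},M_{jk})$ has both members positive. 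If some pair has \emph{both} members zero, the corresponding $d_i$ disappears from $\bigl(\sum_i(-a_{ii})d_i\bigr)\bigl(\sum_i M_{jk}/d_i\bigr)$ entirely and the infimum is still attained at an interior point; the correct dichotomy is ``attained iff no pair has exactly one member equal to zero,'' which is exactly why the equality clause in (b) is phrased the way it is. As written, your argument would misclassify a matrix with, say, $a_{11}=M_{23}=0$ and equality in the domination inequality as D-stable, when in fact $AD$ acquires a purely imaginary pair at the attained minimizer. This is fixable in a line, but the fix changes the statement you end up proving.

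The serious gap is in (ii). After correctly translating diagonal stability into Sylvester conditions for $-(PA+A^\trans P)$ and observing that the $(1,3)$-minor condition, with $y=p_1/p_3$, is the first inequality of (b), you assert that eliminating $p_2$ from the $(1,2)$-minor, the $(2,3)$-minor, and $\det M<0$ ``reduces joint solvability in $p_2$ to a single discriminant condition in $y$, which after simplification is exactly $(b_1y+b_2)^2<4M_{12}M_{23}\,y$.'' That elimination \emph{is} the theorem: you have three constraints, each quadratic in $p_2$, and you must show that their solution sets have a common point in $p_2>0$ if and only if one explicit inequality in $y$ holds --- in both directions, since the sufficiency of (a)+(b) also requires reconstructing an admissible $p_2$ from a feasible $y$. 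Observing that $b_1$ and $b_2$ appear among the coefficients of $\det M$ is not evidence that the three intervals intersect exactly under the stated condition (note that $M_{23},M_{12},b_1,b_2$ are the $(1,1)$, $(3,3)$, $(1,3)$, $(3,1)$ entries of $\operatorname{adj}A$, so the second inequality is the first one applied to the adjugate --- a structure your plan does not explain or exploit). Until this computation is actually carried out, part (ii) is a statement of intent rather than a proof; the paper's reference to Cross~\cite{Cross1978} is doing all the work here.
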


Finally, as new results, we characterize D-(semi)stability on a linear subspace with dimension one or two.

\begin{proposition} \label{prop:Dstab_1dimS}
Let $A \in \R^{n \times n}$ and $S \subseteq \R^n$ be a linear subspace with $\im A \subseteq S$
and $\dim S = 1$.
Then, $A$ is D-semistable on $S$ if and only if $a_{ii} \leq 0$ for all $i$.
\end{proposition}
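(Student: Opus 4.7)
The strategy is to exploit the rank structure forced by the assumptions. Fix a nonzero vector $s \in \R^n$ with $S = \R s$. Since $\im A \subseteq S$ and $\dim S = 1$, every column of $A$ is a scalar multiple of $s$, so $A = s c^\trans$ for some $c \in \R^n$ (with $c = 0$ allowed). In particular, $a_{ij} = s_i c_j$, and hence $a_{jj} = s_j c_j$ for every $j$.

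Next I would compute the action of $AD$ on $S$ for arbitrary $D = \diag(d_1, \ldots, d_n) \in \mathcal{D}_+$. Because $\im(AD) \subseteq \im A \subseteq S$, the restriction $AD|_S \colon S \to S$ is well defined, and since $\dim S = 1$ it is multiplication by a (real) scalar $\lambda(D)$. A direct calculation using the factorization $A = s c^\trans$ gives
\[
AD s = s \, c^\trans D s = \Bigl( \sum_{j=1}^n c_j d_j s_j \Bigr) s = \Bigl( \sum_{j=1}^n a_{jj} d_j \Bigr) s ,
\]
where the last equality uses $c_j s_j = a_{jj}$ for each $j$. Hence $\lambda(D) = \sum_{j=1}^n a_{jj} d_j$, which is the unique eigenvalue of $AD|_S$.

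To finish, $A$ is D-semistable on $S$ if and only if $\lambda(D) \le 0$ for every $D \in \mathcal{D}_+$; that is, $\sum_j a_{jj} d_j \le 0$ for all choices of $d_1, \ldots, d_n > 0$. Since the $d_j$ are independent positive parameters, this holds if and only if $a_{jj} \le 0$ for every $j$, which is the claim.

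There is no real obstacle in this argument; the only mildly delicate point is handling indices $j$ with $s_j = 0$. These are dispatched automatically by the rank-one form of $A$, since $a_{jj} = s_j c_j = 0$ whenever $s_j = 0$, so the formula $\lambda(D) = \sum_j a_{jj} d_j$ holds term by term without a case split.
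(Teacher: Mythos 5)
Your proof is correct and follows essentially the same route as the paper: both arguments reduce to the observation that the single eigenvalue of $AD|_S$ equals $\sum_{i} a_{ii} d_i$ (the paper reads this off from the characteristic polynomial $\lambda^{n-1}(\lambda - \tr(AD))$ of the rank-one matrix $AD$, while you obtain it from the explicit factorization $A = s c^\trans$), after which the equivalence with $a_{ii} \le 0$ for all $i$ is immediate since the $d_i$ range independently over the positive reals.
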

\begin{proof}
For $D \in \mathcal{D}_+$, the characteristic polynomial of $AD$ is
\[
\det (\lambda I - AD) = \lambda^{n-1}(\lambda + b)\text{ with } b = \sum_{1\leq i\leq n} (-a_{ii})d_i.
\]
Thus, $A$ is D-semistable on $S$ if and only if $b\geq 0$ for all $d_1,\ldots,d_n>0$ which is equivalent to $a_{ii} \leq 0$ for all $i$.
\end{proof}

\begin{proposition} \label{prop:Dstab_2dimS}
Let $A \in \R^{n \times n}$ and $S \subseteq \R^n$ be a linear subspace with $\im A \subseteq S$
and $\dim S = 2$.
Then, $A$ is D-stable on $S$ if and only if
\begin{itemize}
\item[(a)] $a_{ii} \leq 0$ for all $i$ and $a_{ii} < 0$ for some $i$ and
\item[(b)] $M_{ij} \geq 0$ for all $i\neq j$ and $M_{ij}>0$ for some $i\neq j$.
\end{itemize}
\end{proposition}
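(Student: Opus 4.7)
The plan is to exploit the rank bound coming from $\dim S = 2$. Since $\im A \subseteq S$ and $D$ is invertible, $\im(AD) \subseteq S$, so $AD$ has rank at most $2$. Consequently, every principal minor of $AD$ of order at least $3$ vanishes, and its characteristic polynomial collapses to
\[
\det(\lambda I - AD) = \lambda^{n-2}\bigl(\lambda^2 - c_1(D)\,\lambda + c_2(D)\bigr),
\]
where $c_1(D)$ and $c_2(D)$ are respectively the trace and the sum of $2 \times 2$ principal minors of $AD$. Block-triangularising $AD$ in a basis adapted to $S$ shows that the two possibly nonzero eigenvalues of $AD$ coincide with those of $AD|_S$. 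Hence $AD$ is stable on $S$ if and only if this quadratic has roots with negative real part, which by Routh--Hurwitz is equivalent to $c_1(D) < 0$ and $c_2(D) > 0$.

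The next step is to compute $c_1$ and $c_2$ explicitly. Since $(AD)_{ij} = a_{ij}d_j$, a direct calculation gives
\[
c_1(D) = \sum_{i} a_{ii}\,d_i
\qquad\text{and}\qquad
c_2(D) = \sum_{i<j} M_{ij}\,d_i d_j .
\]
Thus $A$ is D-stable on $S$ if and only if the linear form $\sum_i a_{ii}d_i$ is strictly negative and the bilinear form $\sum_{i<j} M_{ij}d_i d_j$ is strictly positive for every $d \in \R^n_+$.

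The conclusion then follows by elementary sign analysis. Sending a single $d_j \to \infty$ (others fixed) forces $a_{jj} \leq 0$ for each $j$, and evaluating at $d = (1,\dots,1)$ rules out the case where every $a_{ii}$ vanishes; this yields condition (a), and the converse is immediate because then $\sum_i a_{ii}d_i$ is a sum of nonpositive terms with at least one strictly negative summand. Similarly, choosing $d_i = d_j = 1$ with $d_k \to 0$ for $k \notin \{i,j\}$ forces $M_{ij} \geq 0$ for all $i \neq j$, and evaluating at $d = (1,\dots,1)$ prevents simultaneous vanishing; this yields condition (b), with the converse again clear.

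There is no substantial obstacle in the argument: the conceptual content is the observation that the hypothesis $\dim S = 2$ makes the characteristic polynomial effectively quadratic, after which the problem reduces to signs of two polynomial expressions in $d_1, \ldots, d_n$. The only point demanding care is the identification of the spectrum of $AD|_S$ with the roots of the reduced quadratic, which is needed so that the Routh--Hurwitz criterion applies on $S$ rather than only on $\R^n$.
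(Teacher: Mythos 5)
Your proposal is correct and takes essentially the same route as the paper: both reduce to the quadratic factor $\lambda^2+b\lambda+c$ of the characteristic polynomial of $AD$ (using $\operatorname{rank}(AD)\le 2$), identify $b=-\sum_i a_{ii}d_i$ and $c=\sum_{i\ne j}M_{ij}d_id_j$ up to the symmetry $M_{ij}=M_{ji}$, and apply the Routh--Hurwitz condition $b>0$, $c>0$ for all positive $d$. You merely spell out the limiting sign arguments and the identification of the spectrum of $AD|_S$ with the roots of the quadratic, which the paper leaves implicit.
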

\begin{proof}
For $D \in \mathcal{D}_+$, the characteristic polynomial of $AD$ is
\[
\det (\lambda I - AD) = \lambda^{n-2}(\lambda^2 + b \lambda + c)
\]
with
\[
b = \sum_{1\leq i\leq n} (-a_{ii})d_i \quad \text{and} \quad c = \sum_{\substack{1\leq i,j\leq n \\ i\neq j}} M_{ij}d_id_j .
\]
Thus, $A$ is D-stable on $S$ if and only if $b>0$ and $c>0$ for all $d_1,\ldots,d_n>0$
which is equivalent to $(a)$ and $(b)$.
\end{proof}


\section{Examples} \label{sec:examples}

To illustrate our main results, Theorems~\ref{thm:main_cycle} and~\ref{thm:main_wr},
we present a series of examples.
In particular, we consider the following networks, leading to special ODE systems:
\begin{itemize}
\item irreversible three-cycle (dimension two) $\to$ trinomial ODE system
\item irreversible three-cycle (dimension three, stoichiometric subspace of dimension two) $\to$ binomial ODE system
\item irreversible four-cycle (dimension three) $\to$ binomial ODE system 
\item reversible chain (arbitrary dimension)
\item S-system (arbitrary dimension) $\to$ binomial ODE system
\end{itemize}
For the two- and three-dimensional examples,
we use the characterizations of D-stability and diagonal stability
given in Propositions~\ref{prop:2by2_characterization},~\ref{prop:3by3_characterization},
and~\ref{prop:Dstab_2dimS}.
For the examples with arbitrary dimension, we use Proposition~\ref{prop:Dstab_1dimS}.


\begin{example}\label{ex:n2_m3_general}
Consider the generalized mass-action system
\begin{center}
\begin{tikzpicture}[scale=.8]
\node (0) at (0,0)  {$\begin{array}{c} a_1 \mathsf{X} + b_1 \mathsf{Y} \\ (\alpha_1 \mathsf{X} + \beta_1 \mathsf{Y})\end{array}$};
\node (X) at (6,0)  {$\begin{array}{c} a_2 \mathsf{X} + b_2 \mathsf{Y} \\ (\alpha_2 \mathsf{X} + \beta_2 \mathsf{Y})\end{array}$};
\node (Y) at (3,-3) {$\begin{array}{c} a_3 \mathsf{X} + b_3 \mathsf{Y} \\ (\alpha_3 \mathsf{X} + \beta_3 \mathsf{Y})\end{array}$};
\draw[arrows={-angle 45}] (0) to node [above] {$k_{12}$} (X);
\draw[arrows={-angle 45}] (X) to node [below] {$\phantom{.} \;\; k_{23}$} (Y);
\draw[arrows={-angle 45}] (Y) to node [below] {$k_{31}$} (0);
\end{tikzpicture}
\end{center}
and the resulting ODE system
\begin{align*}
\dd{x}{t} &= k_{12}(a_2-a_1)x^{\alpha_1} y^{\beta_1} + k_{23}(a_3-a_2)x^{\alpha_2} y^{\beta_2} + k_{31}(a_1-a_3)x^{\alpha_3} y^{\beta_3}, \\
\dd{y}{t} &= k_{12}(b_2-b_1)x^{\alpha_1} y^{\beta_1} + k_{23}(b_3-b_2)x^{\alpha_2} y^{\beta_2} + k_{31}(b_1-b_3)x^{\alpha_3} y^{\beta_3}.
\end{align*}

The matrix $YA_{k=1}\widetilde Y^\trans$ is given by
\begin{align*}
(YA_{k=1}\widetilde Y^\trans)_{11} &= \alpha_1(a_2-a_1) + \alpha_2(a_3-a_2) + \alpha_3(a_1-a_3), \\
(YA_{k=1}\widetilde Y^\trans)_{12} &= \beta_1(a_2-a_1) + \beta_2(a_3-a_2) + \beta_3(a_1-a_3), \\
(YA_{k=1}\widetilde Y^\trans)_{21} &= \alpha_1(b_2-b_1) + \alpha_2(b_3-b_2) + \alpha_3(b_1-b_3), \\
(YA_{k=1}\widetilde Y^\trans)_{22} &= \beta_1(b_2-b_1) + \beta_2(b_3-b_2) + \beta_3(b_1-b_3),
\end{align*}
and
\begin{align*}
\det(Y A_{k = 1} \widetilde Y^\trans) =
\underbrace{\det \begin{bmatrix}a_2-a_1 & b_2-b_1 \\ a_3-a_2 & b_3-b_2 \end{bmatrix}}_{d_{ab}}
\cdot
\underbrace{\det\begin{bmatrix}\alpha_2-\alpha_1 & \beta_2-\beta_1 \\ \alpha_3-\alpha_2 & \beta_3-\beta_2 \end{bmatrix}}_{d_{\alpha\beta}} .
\end{align*}
Now, assume $d_{ab} \neq 0$, that is, $\dim S = 2$.
By Theorem~\ref{thm:main_cycle} and Proposition~\ref{prop:2by2_characterization}(i),
the unique complex-balanced equilibrium is linearly stable for all $k$ {\em if and only if}
\begin{align*}
&d_{ab} \, d_{\alpha\beta} > 0,\\
&\alpha_1(a_2-a_1) + \alpha_2(a_3-a_2) + \alpha_3(a_1-a_3) \leq 0,\\
&\beta_1(b_2-b_1) + \beta_2(b_3-b_2) + \beta_3(b_1-b_3) \leq 0,
\end{align*}
and the latter two are not both zero.
\end{example}

\begin{example}\label{ex:ivanova}
Consider the generalized mass-action system
\begin{center}
\begin{tikzpicture}[scale=.8]
\node (X) at (0,0)  {$\begin{array}{c} \mathsf{X} \\ (\alpha_1 \mathsf{X} + \beta_1 \mathsf{Y} + \gamma_1 \mathsf{Z})\end{array}$};
\node (Y) at (6,0)  {$\begin{array}{c} \mathsf{Y} \\ (\alpha_2 \mathsf{X} + \beta_2 \mathsf{Y} + \gamma_2 \mathsf{Z})\end{array}$};
\node (Z) at (3,-3) {$\begin{array}{c} \mathsf{Z} \\ (\alpha_3 \mathsf{X} + \beta_3 \mathsf{Y} + \gamma_3 \mathsf{Z})\end{array}$};
\draw[arrows={-angle 45}] (X) to node [above] {$k_{12}$} (Y);
\draw[arrows={-angle 45}] (Y) to node [below] {$\phantom{.} \;\; k_{23}$} (Z);
\draw[arrows={-angle 45}] (Z) to node [below] {$k_{31}$} (X);
\end{tikzpicture}
\end{center}
and the resulting ODE system
\begin{align*}
\dd{x}{t} &= k_{31} x^{\alpha_3} y^{\beta_3} z^{\gamma_3} - k_{12} x^{\alpha_1} y^{\beta_1} z^{\gamma_1}, \\
\dd{y}{t} &= k_{12} x^{\alpha_1} y^{\beta_1} z^{\gamma_1} - k_{23} x^{\alpha_2} y^{\beta_2} z^{\gamma_2}, \\
\dd{z}{t} &= k_{23} x^{\alpha_2} y^{\beta_2} z^{\gamma_2} - k_{31} x^{\alpha_3} y^{\beta_3} z^{\gamma_3}.
\end{align*}
Clearly, $\dim S = 2$.
The matrix $YA_{k=1}\widetilde Y^\trans$ is given by
\begin{align*}
\begin{bmatrix}
\alpha_3-\alpha_1 & \beta_3-\beta_1 & \gamma_3-\gamma_1 \\ \alpha_1-\alpha_2 & \beta_1-\beta_2 & \gamma_1-\gamma_2 \\ \alpha_2-\alpha_3 & \beta_2-\beta_3 & \gamma_2-\gamma_3
\end{bmatrix}.
\end{align*}
By Theorem~\ref{thm:main_cycle} and Proposition~\ref{prop:Dstab_2dimS},
all the complex-balanced equilibria are linearly stable if and only if
\begin{align*}
\alpha_3 \leq \alpha_1, \beta_1 \leq \beta_2, \gamma_2 \leq \gamma_3
\end{align*}
with at least one of the three inequalities satisfied strictly and
\begin{align*}
\det\begin{bmatrix}
\alpha_3-\alpha_1 & \beta_3-\beta_1 \\ \alpha_1-\alpha_2 & \beta_1-\beta_2
\end{bmatrix}&\geq 0,\\
\det\begin{bmatrix}
\alpha_3-\alpha_1 & \gamma_3-\gamma_1 \\ \alpha_2-\alpha_3 & \gamma_2-\gamma_3
\end{bmatrix}&\geq 0,\\
\det\begin{bmatrix}
\beta_1-\beta_2 & \gamma_1-\gamma_2 \\ \beta_2-\beta_3 & \gamma_2-\gamma_3
\end{bmatrix}&\geq 0
\end{align*}
with at least one of the three inequalities satisfied strictly.
\end{example}

\begin{example} \label{ex:n3_m4_general}
Consider the generalized mass-action system
\begin{center}
\begin{tikzpicture}[scale=.8]
\node (0) at (0,0) {$\begin{array}{c} \mathsf{0} \\ (\alpha_1 \mathsf{X} + \beta_1 \mathsf{Y} + \gamma_1 \mathsf{Z})\end{array}$};
\node (X) at (6,0) {$\begin{array}{c} \mathsf{X} \\ (\alpha_2 \mathsf{X} + \beta_2 \mathsf{Y} + \gamma_2 \mathsf{Z})\end{array}$};
\node (Y) at (6,-3) {$\begin{array}{c} \mathsf{Y} \\ (\alpha_3 \mathsf{X} + \beta_3 \mathsf{Y} + \gamma_3 \mathsf{Z})\end{array}$};
\node (Z) at (0,-3) {$\begin{array}{c} \mathsf{Z} \\ (\alpha_4 \mathsf{X} + \beta_4 \mathsf{Y} + \gamma_4 \mathsf{Z})\end{array}$};
\draw[arrows={-angle 45}] (0) to node [above] {$k_{12}$} (X);
\draw[arrows={-angle 45}] (X) to node [right] {$k_{23}$} (Y);
\draw[arrows={-angle 45}] (Y) to node [below] {$k_{34}$} (Z);
\draw[arrows={-angle 45}] (Z) to node [left] {$k_{41}$} (0);
\end{tikzpicture}
\end{center}
and the resulting ODE system
\begin{align*}
\dd{x}{t} &= k_{12} x^{\alpha_1} y^{\beta_1} z^{\gamma_1} - k_{23} x^{\alpha_2} y^{\beta_2} z^{\gamma_2}, \\
\dd{y}{t} &= k_{23} x^{\alpha_2} y^{\beta_2} z^{\gamma_2} - k_{34} x^{\alpha_3} y^{\beta_3} z^{\gamma_3}, \\
\dd{z}{t} &= k_{34} x^{\alpha_3} y^{\beta_3} z^{\gamma_3} - k_{41} x^{\alpha_4} y^{\beta_4} z^{\gamma_4}.
\end{align*}

Clearly, $\dim S = 3$.
The matrix $YA_{k=1}\widetilde Y^\trans$ is given by
\begin{align*}
\begin{bmatrix}
\alpha_1-\alpha_2 & \beta_1-\beta_2 & \gamma_1-\gamma_2 \\ \alpha_2-\alpha_3 & \beta_2-\beta_3 & \gamma_2-\gamma_3 \\ \alpha_3-\alpha_4 & \beta_3-\beta_4 & \gamma_3-\gamma_4
\end{bmatrix}.
\end{align*}
For simplicity, we consider particular kinetic orders
\begin{center}
\begin{tikzpicture}[scale=.8]
\node (0) at (0,0) {$\begin{array}{c} \mathsf{0} \\ (\gamma \mathsf{Z})\end{array}$};
\node (X) at (6,0) {$\begin{array}{c} \mathsf{X} \\ (\mathsf{X})\end{array}$};
\node (Y) at (6,-3) {$\begin{array}{c} \mathsf{Y} \\ (\alpha \mathsf{X} + \mathsf{Y})\end{array}$};
\node (Z) at (0,-3) {$\begin{array}{c} \mathsf{Z} \\ (\beta \mathsf{Y} + \mathsf{Z})\end{array}$};
\draw[arrows={-angle 45}] (0) to node [above] {$k_{12}$} (X);
\draw[arrows={-angle 45}] (X) to node [right] {$k_{23}$} (Y);
\draw[arrows={-angle 45}] (Y) to node [below] {$k_{34}$} (Z);
\draw[arrows={-angle 45}] (Z) to node [left] {$k_{41}$} (0);
\end{tikzpicture}
\end{center}
and the resulting matrix
\begin{align*}
YA_{k=1}\widetilde Y^\trans = \begin{bmatrix*}
-1 & 0 & \gamma \\ 1-\alpha & -1 & 0 \\ \alpha & 1-\beta & -1
\end{bmatrix*}.
\end{align*}

In the following table, we choose particular values for the kinetic orders $\alpha$, $\beta$, and $\gamma$,
leading to different situations regarding stability, D-stability, diagonal stability, and being a $P_0^+$-matrix.
\begin{center}
\begin{tabular}{rrr|l}
$\alpha$ & $\beta$ & $\gamma$ & $YA_{k=1}\widetilde Y^\trans$\\
\hline
\hline
$0$ &  $0$ &  $0$ & \text{diagonally stable} \\
$5$ &  $0$ & $-3$ & \text{D-stable, but not diagonally stable} \\
$3$ &  $4$ & $-4$ & \text{stable $P_0^+$-matrix, but not D-stable} \\
$2$ & $-2$ &  $1$ & \text{stable, but not $P_0^+$-matrix} \\
$0$ & $-2$ & $-3$ & \text{unstable $P_0^+$-matrix} 
\end{tabular}
\end{center}
In the first and second case,
for all rate constants, 
there exists a complex-balanced equilibrium
which is linearly stable by Theorem~\ref{thm:main_cycle} and Proposition~\ref{prop:3by3_characterization}(i)
and unique by Theorem~\ref{thm:main_injectivity}.
In the third and fourth case,
the unique complex-balanced equilibrium is linearly stable for some rate constants, but not for all.
\end{example}

\begin{example} \label{ex:rev_chain}
Consider the generalized chemical reaction network
\begin{center}
\begin{tikzpicture}
\node (Z1) at (0,0) {$\begin{array}{c} y(1) \\ (\tilde y(1))\end{array}$};
\node (Z2) at (2.5,0) {$\begin{array}{c} y(2) \\ (\tilde y(2))\end{array}$};
\node (ZM) at (5,0) {$\cdots$};
\node (Z3) at (5,0) {\phantom{$\begin{array}{c} y(2) \\ (\tilde y(2))\end{array}$}};
\node (Z4) at (7.5,0) {$\begin{array}{c} y(m) \\ (\tilde y(m))\end{array}$};
\draw[arrows={-angle 45}] (Z1.5) to node [above] {} (Z2.175);
\draw[arrows={-angle 45}] (Z2.-175) to node [below] {} (Z1.-5);
\draw[arrows={-angle 45}] (Z2.5) to node [above] {} (Z3.175);
\draw[arrows={-angle 45}] (Z3.-175) to node [below] {} (Z2.-5);
\draw[arrows={-angle 45}] (Z3.5) to node [above] {} (Z4.175);
\draw[arrows={-angle 45}] (Z4.-175) to node [below] {} (Z3.-5);
\end{tikzpicture}
\end{center}
where the graph $G$ is a reversible chain.
In particular, all cycles of $G$ correspond to reversible reactions.
For every cycle $i \rightleftarrows i+1$ (denoted by $C$), we have
\[
Y A_{k=1}^C \widetilde Y^\trans = -(y(i+1)-y(i))(\tilde{y}(i+1)-\tilde{y}(i))^\trans ,
\]
a dyadic product. 
By Proposition~\ref{prop:Dstab_1dimS},
this (rank-one) matrix is D-semistable on $S^C = \im (y(i+1)-y(i))$ if and only if all diagonal entries are non-positive. 
By Theorem~\ref{thm:main_wr},
if there is a cycle $i \rightleftarrows i+1$ and an index (species) $s \in \{1,\ldots,n\}$
with 
\[
-(y(i+1)-y(i))_s(\tilde{y}(i+1)-\tilde{y}(i))_s > 0 ,
\]
then there is a complex-balanced equilibrium (for some rate constants) that is not linearly stable.
\end{example}

\begin{example}
For the real matrices $G,H \in \R^{n \times n}$ and the positive vectors $\alpha, \beta \in \R^n_+$, the ODE system
\begin{align*}
\dot{x}_1 &= \alpha_1 \, x_1^{g_{11}}\cdots x_n^{g_{1n}} - \beta_1 \, x_1^{h_{11}}\cdots x_n^{h_{1n}} \\
          & \;\, \vdots \\
\dot{x}_n &= \alpha_n \, x_1^{g_{n1}}\cdots x_n^{g_{nn}} - \beta_n \, x_1^{h_{n1}}\cdots x_n^{h_{nn}} \\
\end{align*}
on $\R^n_+$ is called an \emph{S-system}~\cite{Savageau1969,Voit2013}. 
This ODE is associated to the generalized mass-action system
\begin{center}
\begin{tikzpicture}
\node (Z1) at (0,0) {$\begin{array}{c} \mathsf{0} \\ (g_{11}\mathsf{X}_1 + \cdots + g_{1n}\mathsf{X}_n)\end{array}$};
\node (X1) at (5.5,0) {$\begin{array}{c} \mathsf{X}_1 \\ (h_{11}\mathsf{X}_1 + \cdots + h_{1n}\mathsf{X}_n)\end{array}$};
\draw[arrows={-angle 45}] (Z1.2) to node [above] {$\alpha_1$} (X1.178);
\draw[arrows={-angle 45}] (X1.-178) to node [below] {$\beta_1$} (Z1.-2);
\node (dotdotdot) at (2.75,-1.5) {$\vdots$};
\node (Zn) at (0,-3) {$\begin{array}{c} \mathsf{0} \\ (g_{n1}\mathsf{X}_1 + \cdots + g_{nn}\mathsf{X}_n)\end{array}$};
\node (Xn) at (5.5,-3) {$\begin{array}{c} \mathsf{X}_n \\ (h_{n1}\mathsf{X}_1 + \cdots + h_{nn}\mathsf{X}_n)\end{array}$};
\draw[arrows={-angle 45}] (Zn.2) to node [above] {$\alpha_n$} (Xn.178);
\draw[arrows={-angle 45}] (Xn.-178) to node [below] {$\beta_n$} (Zn.-2);
\end{tikzpicture}
\end{center}

For every cycle $C$ (corresponding to a reversible reaction $\mathsf{0} \rightleftarrows \mathsf{X}_i$),
the matrix $YA_{k=1}^C\widetilde{Y}^\trans$ has $(g_{\cdot i}-h_{\cdot i})^\trans$ as its $i$-th row,
and all other rows are zero. 
By Proposition~\ref{prop:Dstab_1dimS},
this (rank-one) matrix is D-semistable on $S^C = \im (e_i)$ if and only if the diagonal entry $g_{ii}-h_{ii}$ is non-positive. 
By Theorem~\ref{thm:main_wr},
if there is a reversible reaction $\mathsf{0} \rightleftarrows \mathsf{X}_i$ such that $g_{ii} > h_{ii}$,
then there is a complex-balanced equilibrium (for some rate constants) that is not linearly stable.
\end{example}


\paragraph{Acknowledgments.}
BB and SM were supported by the Austrian Science Fund (FWF), project 28406. 
GR was supported by the FWF, project 27229.


\bibliographystyle{abbrv}
\bibliography{linearstability}

\end{document}